\newtheorem{theorem}{Theorem}[section]
\newtheorem{lemma}[theorem]{Lemma}
\newtheorem{corollary}[theorem]{Corollary}
\newtheorem{conjecture}[theorem]{Conjecture}
\newtheorem*{claim*}{Claim}
\newtheorem*{dilworth}{Dilworth's theorem}
\newtheorem*{backward_induction}{Backward induction}
\theoremstyle{definition}
\newtheorem*{alg}{Algorithm}
\numberwithin{equation}{section}
\newcommand{\Prob}{\mathbb{P}}
\newcommand{\Exp}{\mathbb{E}}
\newcommand{\Alg}{\mathcal{F}}
\newcommand{\Algg}{\mathcal{G}}
\newcommand{\Alggg}{\mathcal{H}}
\newcommand{\AlgA}{\mathcal{A}}
\newcommand{\Borel}{\mathcal{B}}
\newcommand{\Pow}{\mathcal{P}}
\newcommand{\Real}{\mathbb{R}}
\newcommand{\Class}{\mathcal{C}}
\newcommand{\Pos}{\mathscr{P}}
\newcommand{\dd}[2]{\frac{\textrm{d}#1}{\textrm{d}#2}}
\newcommand{\st}{$^{\text{st}}$}
\renewcommand{\th}{$^{\text{th}}$}
\renewcommand{\epsilon}{\varepsilon}
\begin{document}

\title{The secretary problem on an unknown poset}

\author{Bryn Garrod}
\address{Department of Pure Mathematics and Mathematical Statistics,
Centre for Mathematical Sciences,
University of Cambridge,
Wilberforce Road,
Cambridge,
CB3 0WB,
UK}
\email{b.garrod@dpmms.cam.ac.uk}
\thanks{}

\author{Robert Morris}
\address{IMPA, Estrada Dona Castorina 110, Jardim Bot\^anico, Rio de Janeiro, RJ, Brasil}
\email{rob@impa.br}
\thanks{This research was supported by: (BG) EPSRC; (RM) ERC Advanced grant DMMCA, and a Research Fellowship from Murray Edwards College, Cambridge}


\begin{abstract}
We consider generalizations of the classical secretary problem, also known as the problem of optimal choice, to posets where the only information we have is the size of the poset and the number of maximal elements. We show that, given this information,  there is an algorithm that is successful with probability at least $\frac{1}{e}$.  We conjecture that if there are $k$ maximal elements and $k \geq 2$ then this can be improved to $\sqrt[k-1]{\frac{1}{k}}$, and prove this conjecture for posets of width $k$.  We also show that no better bound is possible.
\end{abstract}

\maketitle

\section{Introduction} \label{sec:intro}

The exact origins of the classical secretary problem are complicated (and the subject of Ferguson's history of the problem~\cite{Fer}), but the problem was popularized by Martin Gardner in his \emph{Scientific American} column in February 1960, as the game \emph{googol}.  The problem itself is simple to state, and its `secretary problem' formulation is as follows.  There are $n$ candidates to be interviewed for a position as a secretary.  They are interviewed one by one and, after each interview, the interviewer must decide whether or not to accept that candidate.  If the candidate is accepted then the process stops, and if the candidate is rejected then the interviewer moves on to the next candidate.  The interviewer may only accept the most recently interviewed candidate.  At each stage, the interviewer knows the complete ranking of the candidates interviewed so far, all of whom are comparable, but has no other measure of their ability.  The interviewer is only interested in finding the very best candidate; selecting any other for the job is considered a failure.  It is well-known (see~\cite{Fer}, for example) that the interviewer has a simple strategy that is successful with probability at least $\frac{1}{e}$, and that there is no strategy achieving a better bound.

Since 1960, many generalizations of the problem have been considered.  One direction has been to consider partial orders on the candidates other than a total order.  In this case, the interviewer knows the poset induced by the candidates interviewed so far, and wishes to choose a candidate who is maximal in the original poset.  Morayne~\cite{Mor} considered the case of a full binary tree of depth $n$, and showed that the optimal strategy is to select the maximum out of the elements seen so far when the poset induced by these elements is either linear of length greater than $\frac{n}{2}$ or non-linear with a unique maximum.  He showed that as $n$ tends to infinity, the probability of success tends to 1.  Garrod, Kubicki and Morayne~\cite{GKM} considered the case of $n$ pairs of `twins', where there are $n$ levels with two incomparable elements on each level.  They showed that the optimal strategy is to wait until a certain threshold number of levels have been seen and then to select the next element that is maximal and whose twin has already been seen.  They further showed that as $n$ tends to infinity, this threshold tends to $\sim0.4709n$ and the probability of success to $\sim0.7680$.  Calculating these asymptotic values for the natural extension to `$k$-tuplets', for $k > 2$, seems to be a harder problem.

A further interesting generalization was an attempt to find an algorithm that was successful on \emph{all} posets of a given size with positive probability. Surprisingly, Preater~\cite{Pre} proved that there is such a `universal' algorithm (depending only on the size of the poset), which is successful on every poset with probability at least $\frac{1}{8}$.  In this algorithm, an initial random number of elements are rejected and a subsequent element is accepted according to randomized criteria.  A slightly modified version of the algorithm, also suggested by Preater, was analysed by Georgiou, Kuchta, Morayne and Niemiec~\cite{GKMN}, and gave an improved lower bound of $\frac{1}{4}$ for the probability of success.  More recently, Kozik~\cite{Koz} introduced a `dynamic threshold strategy' and showed that it was successful with probability at least $\frac{1}{4} + \epsilon$, for some $\epsilon > 0$ and for all sufficiently large posets.  Since the best possible probability of success in the classical secretary problem, on a totally ordered set, is $\frac{1}{e}$, the best possible lower bound for a universal algorithm must lie between $\frac{1}{4} + \epsilon$ and $\frac{1}{e}$.

In this paper, we show that given any poset there is an algorithm that is successful with probability at least $\frac{1}{e}$, so, in this sense, the total order is the hardest possible partial order. In fact, this algorithm depends only on the size of the poset and its number of maximal elements, so it is universal for any family where these are given. It is therefore natural to ask which is the hardest partial order with a given number of maximal elements. The most obvious choice is the poset consisting of $k$ disjoint chains. We shall give an asymptotically sharp lower bound on the probability of success in the problem of optimal choice on $k$ disjoint chains, and show that it is at least as hard as on any poset with $k$ maximal elements and of width $k$, that is, whose largest antichain has size $k$.

More precisely, our main aim is to prove the following two theorems.

\begin{theorem} \label{thm:pk}
Let $(P,\prec)$ be a poset with $k$ maximal elements and of width $k$.  Then there is an algorithm for the secretary problem on $(P,\prec)$ that is successful with probability at least $p_k$, where
\begin{equation} \label{eqn:pk}
p_k = \left\{ 
\begin{array} { c @{\quad \text{if } }l} \frac{1}{e}  & k=1 \\[+0.3ex]
\sqrt[k-1]{\frac{1}{k}} &  k > 1,
\end{array}\right.
\end{equation}
and these are the best possible such bounds.
\end{theorem}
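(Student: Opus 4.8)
The plan is to prove the two halves of the statement separately: an algorithm achieving success probability at least $p_k$ on \emph{every} poset with $k$ maximal elements and width $k$, and a family of such posets on which no algorithm can beat $p_k$. The organising principle is that the $k$ disjoint chains form the hardest instance. First I would apply Dilworth's theorem to partition $(P,\prec)$ into $k$ chains $C_1,\dots,C_k$. Since each maximal element is the top of its chain and the $k$ maximal elements are pairwise incomparable, they lie in distinct chains; as there are exactly $k$ chains, the top of each $C_i$ is one of the maximal elements. Thus the set of ``good'' elements we are trying to hit is precisely the set of chain-tops, and during a run the arrived elements are covered by $k$ chains whose tops we are chasing. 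The target reduction is then that the problem on $P$ is no harder than the problem on the disjoint union $\hat P=C_1\sqcup\dots\sqcup C_k$, because the extra (cross-chain) comparabilities present in $P$ only give the interviewer more information while leaving the set of maximal elements unchanged.

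For the lower bound I would analyse a threshold strategy of classical secretary type: reject an initial block of elements (a fraction of $n$ fixed by optimisation, or a randomised cut-off), and thereafter accept the first arrived element that is maximal in the observed induced poset. On disjoint chains an observed-maximal element is exactly a current chain-record, and it is a genuine maximal element of $P$ precisely when it is the \emph{last} record of its chain. Conditioning on the arrival order and using the R\'enyi description of records within each chain (the $j$-th arrival of a chain is a record with probability $1/j$, independently), the event that the first post-threshold record is a true top factorises into per-chain contributions which can be combined across the $k$ chains. Optimising the threshold should then yield exactly $p_k$; for $k=1$ this is the classical computation giving $\tau\ln(1/\tau)$, maximised at $\tau=1/e$, so $p_1=1/e$, and for $k\ge 2$ the same scheme, now balancing the competition between the $k$ chains for the earliest qualifying record, should produce $\sqrt[k-1]{1/k}$. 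Feeding the disjoint-chain analysis back through the reduction of the previous paragraph gives the bound $p_k$ on all of the posets in the theorem.

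For the matching upper bound I would exhibit an explicit family of disjoint-chain posets whose optimal success probability tends to $p_k$, and I expect the extremal behaviour to appear only as the chain lengths tend to infinity (and, crucially, with the interviewer ignorant of the individual chain lengths, knowing only $n$ and $k$). On such an instance the optimal stopping rule is determined by backward induction: working from the last arrival, one computes the value of the optimal policy as a function of the observed record-pattern and of how many elements remain, the state space being kept finite by exchangeability. The adversary then chooses the profile of chain lengths so as to minimise this optimal value, and I would solve the resulting optimisation (in the limit of large $n$) to show that the best achievable probability is $p_k+o(1)$, so that no bound larger than $p_k$ can hold.

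I expect the main obstacle to be the exact determination of the constant $\sqrt[k-1]{1/k}$ on both sides: on the algorithmic side, carrying out the multi-chain record computation and the threshold optimisation, and on the impossibility side, solving the backward-induction recursion together with the adversary's length optimisation to the same value. A secondary difficulty, which must be handled with care throughout, is that the interviewer observes only the \emph{unlabelled} induced poset rather than the identities of the arrived elements, so that the reduction ``adding comparabilities can only help'' and the identification of the chains during a run both require a coupling argument rather than a naive simulation. Establishing that the $k$ disjoint chains really are the hardest instance --- the monotonicity of the optimal value under adding cross-chain relations --- is the step I would expect to need the most care.
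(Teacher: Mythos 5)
Your lower-bound half is essentially the paper's route (Dilworth decomposition into $k$ chains, a binomial/threshold rejection phase, then a record computation optimized at $p_k$), but the reduction step is where your version has a genuine gap. You propose to prove that the width-$k$ problem is no harder than the disjoint-chain problem via a general monotonicity statement (``adding cross-chain comparabilities only gives more information''), to be established by a coupling. As you yourself half-suspect, this generic statement is problematic: the interviewer sees only the unlabelled induced poset of $(P,\prec)$, from which the induced disjoint-chain poset is in general \emph{not} recoverable (the chain decomposition of an induced poset is not canonical), so no simulation or obvious coupling transfers an arbitrary strategy from $\hat P$ to $P$. The paper never proves such a monotonicity. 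Instead it fixes one concrete stopping time $\tau_k(p)$ (reject a $\mathrm{Bin}(n,p)$ number of elements, accept the next observed element that is maximal among at most $k$ observed maxima) and redoes the disjoint-chain computation on $P$ directly: conditioned on the configuration $(j_1,\ldots,j_k)$ of elements above the rejected set in each chain, the extra comparabilities can only \emph{shrink} the pool of elements that trigger acceptance, while the $k$ true maxima always remain triggers (width $k$ guarantees every induced poset has at most $k$ maximal elements), so the conditional success probability $|\{i: j_i>0\}|/(j_1+\cdots+j_k)$ only increases and the bound $\frac{k}{k-1}p\bigl(1-p^{k-1}\bigr)$, respectively $p\log\frac{1}{p}$, survives. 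This algorithm-specific inequality is what makes the reduction go through; your plan needs to replace the claimed coupling with something of this kind.

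The upper-bound half has a more serious misconception: you assert that it is \emph{crucial} that the interviewer not know the individual chain lengths, and you let an adversary choose the length profile against a length-ignorant algorithm. But the theorem's optimality claim must defeat algorithms that may depend on the entire poset, so you need a single instance on which \emph{no} stopping time (with full knowledge of the structure, adapted to the observed posets) beats $p_k+o(1)$. Your adversarial formulation does not give this: for each fixed unbalanced profile an informed algorithm may well exceed $p_k$, so ruling out only profile-ignorant strategies leaves the ``best possible'' claim unproved. The paper's choice is $k$ chains of \emph{equal} length $x\to\infty$, with the supremum taken over all stopping times relative to the observed-poset filtration --- ignorance plays no role. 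Moreover, the backward induction you sketch cannot be run directly on the true payoffs $Z_t$, because their distribution depends on how unevenly the chains have been sampled; the paper handles this by conditioning on a uniformity event $U$ (all chains sampled to within $\pm\ell$ of proportionally, which holds with probability $1-o(1)$), introducing an \emph{independent} dominating sequence $Y_t$, proving via backward induction that the optimal rule for $Y_t$ is a pure threshold rule at a multiple of $k\ell$, and only then performing the asymptotic computation that yields $p_k$. Your phrase ``the state space being kept finite by exchangeability'' does not confront this obstacle, which is the main technical content of the upper bound.
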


We emphasize that in both the theorem above and that below, the claimed algorithm is not universal, but depends on both $|P|$ and the number of maximal elements of $(P,\prec)$.

\begin{theorem} \label{thm:1e}
Let $(P,\prec)$ be a poset.  Then there is an algorithm for the secretary problem on $(P,\prec)$ that is successful with probability at least $\frac{1}{e}$, and this is the best possible such bound.
\end{theorem}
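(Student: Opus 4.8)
The plan is to prove the two halves separately: that the constant $1/e$ cannot be improved uniformly (optimality), and that it can always be attained (existence of an algorithm).

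\emph{Optimality.} Here I would exhibit a family of posets on which no algorithm does asymptotically better than $1/e$, and the natural choice is the total order itself. When $(P,\prec)$ is a chain on $n$ elements the problem reduces \emph{exactly} to the classical secretary problem: the poset induced by the first $j$ arrivals is always a chain, so the interviewer sees precisely the relative ranking of what has arrived. For the chain it is classical that the optimal strategy rejects an initial block of $r-1$ elements and then takes the first subsequent record; writing $t=(r-1)/n$, a short computation gives success probability $\frac{r-1}{n}\sum_{j=r}^{n}\frac{1}{j-1}\to t\log(1/t)$, which is maximized at $t=1/e$ with value $1/e$ and tends to $1/e$ from above as $n\to\infty$. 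Hence for every $\epsilon>0$ there is a sufficiently long chain on which every algorithm fails with probability more than $1-\frac1e-\epsilon$, so $1/e$ cannot be replaced by any larger constant.

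\emph{Existence of a good algorithm.} For the lower bound I would use a sampling/threshold algorithm whose only inputs are $n=|P|$ and the number $m$ of maximal elements, as promised in the introduction: reject a prefix of roughly $n/e$ elements and then accept the first subsequent element meeting an acceptance rule. The subtlety is the rule. The naive choice --- accept the first \emph{leader}, i.e.\ the first element maximal in the poset induced by everything seen so far --- already fails once $m\ge2$ (and even for $m=1$ when $P$ is not a chain), because a poset typically has many simultaneous leaders, most of which are not maximal in $P$. The rule must therefore exploit the comparabilities actually observed, accepting a leader only once the arrivals so far make it sufficiently likely to be one of the $m$ genuine maxima; this is where the parameter $m$ enters.

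To analyse such a rule I would condition on the uniform arrival order and decompose the success event according to \emph{which} maximal element is accepted. For a fixed maximal element $a$, the probability that $a$ is accepted should factorize, as in the classical case, into the probability that $a$ arrives after the threshold times the probability that nothing is accepted before $a$; bounding the latter uniformly over all poset structures with the given $(n,m)$ would reduce the whole quantity to a classical-secretary-type sum that I expect to be bounded below by $1/e$. As a consistency check and a partial result, whenever $P$ has width equal to its number $k$ of maximal elements, Theorem~\ref{thm:pk} already supplies an algorithm succeeding with probability at least $p_k$, and $p_k\ge 1/e$ for every $k\ge1$ because $\log k\le k-1$ gives $\frac{\log k}{k-1}\le1$, i.e.\ $\sqrt[k-1]{1/k}\ge e^{-1}$.

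The main obstacle is the design in the second paragraph: the acceptance rule must, \emph{simultaneously for every} poset with the prescribed $n$ and $m$, both catch a true maximal element with probability close to $1/e$ and avoid stopping early on a non-maximal leader. Since the algorithm may use only the observed comparabilities together with $n$ and $m$, I cannot pre-assign elements to maxima or linearly extend $P$ in advance; the rule and its analysis must be genuinely online and structure-free, and controlling the probability of premature acceptance across all admissible posets is the crux of the argument.
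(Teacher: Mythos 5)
Your optimality half is correct and is exactly what the paper does: the problem on a chain is the classical secretary problem, whose optimal success probability tends to $\frac{1}{e}$, so no larger constant can work. Your existence half, however, is a programme rather than a proof, and you say so yourself: the acceptance rule is left undesigned, and the step you describe as the crux --- ``bounding the probability that nothing is accepted before $a$ uniformly over all poset structures with the given $(n,m)$'' --- is precisely the missing argument, not a reduction to one. In fact the per-maximal-element decomposition you propose is not how the paper proceeds, and it is not clear it can be pushed through: with $m\ge 2$ maxima the events ``nothing accepted before $a$'' interact with the poset structure in ways that resist a uniform bound.

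The paper's resolution is different and worth internalizing. The rule (your ``naive leader'' rule, corrected as you suspected by using the parameter $k=m$) is: reject a $\mathrm{Bin}(n,p)$ sample $S(p)$ --- so each element lies in $S(p)$ independently with probability $p$, by Lemma~\ref{lem:independence} --- then accept the first element that is maximal in the observed poset at a moment when the observed poset has \emph{at most $k$} maximal elements. Crucially, the analysis makes no attempt to control premature acceptance across all posets; it simply discards every scenario except one. Condition on the event that exactly $k-1$ of the $k$ maximal elements fall into $S(p)$, which has probability $kp^{k-1}(1-p)$. On this event those $k-1$ maxima stay visible forever, so the rule reduces to waiting for a \emph{unique} maximal element of the residual poset $P'$, conditioned on its maximum avoiding the sample. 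The $k=1$ case is then settled by a coupling with an arbitrary linear extension (Lemma~\ref{lem:one_max_element}): if the algorithm fails on $P'$ it fails on the extension, because a premature unique current maximum in $P'$ is still the unique current maximum in any linear extension; hence the chain is the worst poset with a single maximum, giving a bound $> p\log\frac{1}{p}$, and removing the conditioning on $x_{\max}\in S(p)$ yields $> \frac{p}{1-p}\log\frac{1}{p}$ (Lemma~\ref{lem:conditional_win}; a small technical point here is that the chain bound must hold for the event $\{S(p)\neq P\}\cap\{\text{success}\}$ so that the conditioning computation is valid). Multiplying gives $\Prob[\text{success}] > kp^{k-1}(1-p)\cdot\frac{p}{1-p}\log\frac{1}{p} = kp^{k}\log\frac{1}{p}$, which at $p=e^{-1/k}$ equals exactly $\frac{1}{e}$ (Theorem~\ref{thm:known_max} and Corollary~\ref{cor:known_max}). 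Your consistency check that $p_k\ge\frac{1}{e}$ is correct but only covers width-$k$ posets, as you note; the conditioning-plus-linear-extension device above is the idea your proposal is missing.
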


We conjecture that Theorem~\ref{thm:pk} can be extended to all posets with $k$ maximal elements. It is not inconceivable that the same algorithm works; if not, it would be good to find some other algorithm dependent only on $k$ that does so.

\begin{conjecture}
Let $(P,\prec)$ be a poset with $k$ maximal elements.   Then there is an algorithm for the secretary problem on $(P,\prec)$ that is successful with probability at least $p_k$, where $p_k$ is as defined in~\eqref{eqn:pk}.
\end{conjecture}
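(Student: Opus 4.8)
The plan is to keep the threshold strategy of Theorem~\ref{thm:pk} and, following the authors' remark, to try to show that it (or a mild randomisation of it) still attains $p_k$, reducing the general case to configurations one can analyse rather than inventing a new algorithm. The basic tool I would develop first is a \emph{monotonicity principle}: enlarging the order relation can only help the interviewer. Precisely, if $(P',\prec')$ is obtained from $(P,\prec)$ by making two incomparable elements comparable in a way that leaves the set of maximal elements unchanged, then any algorithm for $P$ may be run verbatim on $P'$ by discarding the extra comparisons it is shown; since the arrival order is a uniform random permutation of the same ground set and the target set (the maximal elements) is identical, the optimal probability of success on $P'$ is at least that on $P$. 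Read in the other direction, \emph{deleting} comparabilities never increases the interviewer's chances. I would prove this by coupling the two arrival processes through the common permutation and comparing the two strategy trees obtained by backward induction.

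I would then use this principle to pass to the hardest posets. Starting from $P$ and repeatedly deleting any cover relation $x \lessdot y$ whose removal does not create a new maximal element (that is, any $x$ lying below a second maximal element), one lowers the probability of success while preserving exactly the original $k$ maximal elements, and terminates at a poset in which every non-maximal element has a unique cover. Such minimal posets are precisely the \emph{up-forests} with $k$ components, each a tree whose root is one of the maximal elements. It therefore suffices to prove the conjecture for up-forests with $k$ roots. When these trees do not branch they are $k$ disjoint chains, of width $k$, and the bound $p_k$ is exactly Theorem~\ref{thm:pk} (via the Dilworth decomposition into $k$ chains underlying the extremal case). The genuinely new situation is a \emph{branching} forest: the width then exceeds $k$, a single root may dominate a large antichain of leaves, and no decomposition into only $k$ chains is available.

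For these posets I would analyse the record strategy directly: reject an initial block of suitably chosen (possibly randomised) size, then accept the first later element that is maximal among those seen so far. Success means this first late record is in fact one of the $k$ maximal elements of $P$; failure means it is a \emph{false record}, dominated by an element not yet arrived. \textbf{The main obstacle} is bounding the probability of a false record using only the number $k$ of maximal elements: when the width exceeds $k$, the large antichains of non-maximal leaves supply many candidate false records that the interviewer cannot distinguish from genuine maximal elements, and the clean $k$-chain accounting of the width-$k$ analysis is lost. A successful proof will most plausibly hinge on showing that each root can be made to \emph{absorb} the false-record risk arising from its own down-set, so that the total risk remains governed by $k$ rather than by the width, and on confirming that branching does not make the forest strictly harder than $k$ disjoint chains. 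Deciding this last point — identifying and analysing the true extremal family among up-forests with $k$ roots — is exactly the gap that keeps the statement a conjecture.
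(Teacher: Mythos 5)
You are attempting a statement that the paper itself does not prove: it is explicitly a conjecture, established only for posets of width $k$ (Theorem~\ref{thm:pk}) and, for general posets, only with the weaker bound $\frac{1}{e}$ (Theorems~\ref{thm:known_max} and~\ref{thm:1e}). Your proposal honestly stops short of a proof, but the foundational tool you build everything on --- the ``monotonicity principle'' that enlarging the order relation (preserving the set of maximal elements) can only help the interviewer, equivalently that deleting comparabilities never helps --- is false, and in fact backwards. Take $P'$ to be a chain $y_1 \prec' y_2 \prec' \cdots \prec' y_{n-1} \prec' x_{\max}$ and $P$ the ``star'' on the same ground set in which $y_i \prec x_{\max}$ for all $i$ and the $y_i$ are pairwise incomparable: then $\prec \,\subset\, \prec'$ and both posets have the same unique maximal element $x_{\max}$, yet on the star the strategy ``accept the first observed element that dominates another observed element'' succeeds with probability $1 - \frac{1}{n}$ (it fails only when $x_{\max}$ arrives first, since no $y_i$ ever dominates anything), while on the chain no strategy beats the classical optimum, which tends to $\frac{1}{e}$. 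So adding comparabilities here \emph{destroys} the interviewer's ability to recognise $x_{\max}$ and strictly hurts. The flaw in your coupling is that the interviewer on $P'$ cannot ``discard the extra comparisons'': elements arrive unlabelled, so what is observed is only the arrival-labelled induced poset under $\prec'$, and this does not determine the induced poset under $\prec$ (in the chain, every pair is comparable and nothing marks which relations were ``extra''). The observation filtration for $P'$ is not a refinement of that for $P$, so the backward-induction strategy trees are not comparable in the way you assert. Indeed the paper's own coupling, in Lemma~\ref{lem:one_max_element}, runs in the \emph{opposite} direction: for the threshold rule $\tau_1(p)$, a linear \emph{extension} (more comparabilities) is shown to be harder, not easier.

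The consequence is that your reduction to up-forests collapses at the first step: since deleting cover relations can strictly \emph{increase} the optimal success probability (chain $\to$ star raises it from roughly $\frac{1}{e}$ to $1-\frac{1}{n}$), the terminal posets of your deletion procedure need not be extremal, and proving the bound $p_k$ for branching up-forests would not imply it for general posets with $k$ maximal elements. Your second step is fine as far as it goes --- the non-branching case is exactly Theorem~\ref{thm:pk} --- and your diagnosis of the remaining difficulty (controlling false records when the width exceeds $k$, where the paper's accounting over $k$ chains via Dilworth's theorem is unavailable) matches the genuine obstruction; note that what the paper actually salvages in general is the conditional bound of Lemma~\ref{lem:conditional_win} together with Theorem~\ref{thm:known_max}, which only yields $kp^k \log\frac{1}{p}$ and hence $\frac{1}{e}$, not $p_k$. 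But as written, even your reduction lemma is refuted by the example above; any rescue would have to either reverse the monotonicity (and then the reduction points the wrong way, towards extensions rather than deletions) or replace it with an argument that never compares optimal values across different order relations on the same ground set.
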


Our algorithm, which gives the bound in Theorem~\ref{thm:1e}, depends only on size of the poset and the number of maximal elements. In the original version of this paper we conjectured that the latter piece of information is not needed; a beautiful proof of this result was given around the same time by Freij and W\"astlund~\cite{FW}. 

\begin{theorem}[Freij and W\"astlund~\cite{FW}]
There is an universal algorithm for the secretary problem which is successful on every poset $(P,\prec)$ with probability at least $\frac{1}{e}$.
\end{theorem}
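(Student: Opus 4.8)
The plan is to exhibit a single threshold-and-record strategy that depends only on $n=|P|$, and to lower-bound its probability of success by $\frac{1}{e}$ for every poset. Model the random arrival by independent uniform arrival times in $[0,1]$, and fix the threshold $\tau=\frac{1}{e}$ (equivalently, reject the first $\sim n/e$ candidates). The strategy is: reject every element arriving before time $\tau$, and thereafter accept the first arriving element that is maximal in the poset induced by the elements seen so far; call such an element a \emph{record}. This is well defined online, since at each step we see all relations among the elements that have arrived, and it uses no information about $P$ beyond $n$. Writing $U_y=\{z:z\succ y\}$ for the strict up-set of $y$ and $\bar U_y=U_y\cup\{y\}$, note that $y$ is a record exactly when $y$ arrives before every element of $U_y$, an event of probability $\frac{1}{|\bar U_y|}$ by symmetry; in particular a maximal element of $P$ is a record whenever it arrives.

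The first reduction is to rewrite success: the algorithm succeeds precisely when \emph{the first record to arrive after time $\tau$ is a maximal element of $P$}. Two structural facts organise the analysis. First, the \emph{last} record in arrival order is always maximal: if a record $r$ were not maximal, any maximal $M\succ r$ would, since $r$ is a record, arrive after $r$ and would itself be a record, contradicting that $r$ is last. Second, fixing the arrival times and deleting comparabilities only creates records, since $U_y$ can only shrink and $y$ first in $\bar U_y$ remains first in a subset. On a total order there is a single maximal element, only the final running maximum is a record that is maximal, and the strategy is \emph{exactly} the classical one; its success probability is the classical optimum, which is at least $\frac{1}{e}$ and shows the bound to be best possible, in line with Theorem~\ref{thm:1e}. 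It therefore remains only to prove the lower bound $\frac{1}{e}$ for an arbitrary poset.

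My route to this is a direct estimate. Since the algorithm selects exactly one element, the success events indexed by the maximal elements are disjoint, so $\Prob(\text{success})=\sum_{M}\Prob(M\text{ is the first record after }\tau)$, the sum over maximal $M$. For fixed $M$ one conditions on its arrival time $t\in(\tau,1]$ (with density $1$, as $M$ is automatically a record there) and must estimate $\Prob(\text{no record arrives in }(\tau,t))$; the aim is to show that the resulting expression dominates the classical integral $\int_{\tau}^{1}\tfrac{\tau}{s}\,\mathrm{d}s=\tau\log\tfrac{1}{\tau}$, which equals $\frac{1}{e}$ at $\tau=\frac{1}{e}$. The heuristic is that a general poset has \emph{many} records that are maximal, making an early good stop far easier than on the chain, where only the last record is maximal.

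The main obstacle is that the events ``$y$ is a record'' for different $y$ are dependent, because the up-sets $\bar U_y$ overlap, so the factor $\Prob(\text{no record in }(\tau,t))$ does not factorise and must be controlled uniformly over all posets. One is tempted to argue by monotonicity --- deleting comparabilities only helps, so the chain is extremal --- but this is false: deleting a relation that does \emph{not} turn an element into a new maximal element merely manufactures spurious non-maximal records, which can trigger a premature stop and strictly lower the success probability. I would instead tame the dependence by coupling $P$ with a linear extension through common arrival times, together with the local observation that deleting a single covering relation $x\prec y$ alters only the record-status of $x$ (its up-set loses exactly $y$, all others being unchanged); this reduces each comparison to a one-element perturbation whose contribution to $\Prob(\text{success})$ can be signed and summed. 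I expect the bookkeeping of this aggregation, rather than the design of the strategy, to be the crux, with the chain computation of the previous paragraph supplying the matching value $\frac{1}{e}$.
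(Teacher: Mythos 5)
You should first note that the paper does not prove this statement at all: it is quoted from Freij and W\"astlund~\cite{FW}, and the paper's own machinery (Theorem~\ref{thm:1e}) only yields the weaker result in which the algorithm is also told the number of maximal elements. So your attempt must stand on its own, and it does not: you have correctly set up the Freij--W\"astlund strategy (i.i.d.\ uniform arrival times, threshold $1/e$, accept the first record), and your preliminary observations are all sound --- a record is an element preceding its strict up-set, every maximal element is a record on arrival, the last record is maximal, and deleting comparabilities only creates records --- but the proposal stops exactly where the theorem begins. The entire content of the result is the uniform lower bound over all posets, and for that you offer only a plan (``I would instead tame the dependence\dots I expect the bookkeeping\dots to be the crux''), not an argument.

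Moreover, the plan as sketched has concrete obstructions. First, the comparison with the classical integral $\int_{\tau}^{1}\frac{\tau}{s}\,\mathrm{d}s$ cannot be made element by element: writing the success probability as $\sum_{M}\int_{\tau}^{1}\Prob\big(\text{no record of } P\setminus\{M\} \text{ in } (\tau,t)\big)\,\mathrm{d}t$, an antichain on $n$ elements has each no-record factor equal to $(1+\tau-t)^{n-1}$, which for large $n$ is far below $\tau/t$; the bound survives only because the $n$ maximal elements compensate after integration. Any correct proof must therefore trade the multiplicity of maximal elements against the no-record probability globally, and your proposal contains no lemma that does this. Second, the covering-relation perturbation scheme inherits precisely the obstruction you yourself identify: deleting a single cover $x\prec y$ can strictly decrease the success probability (a spurious record at $x$ preempts a correct stop) or increase it (if $x$ becomes maximal), so the telescoping sum between $P$ and a linear extension has terms of both signs, with no stated control on their magnitudes --- ``signing and summing'' them is the theorem restated, not a proof of it. And although deleting one cover changes only $x$'s record status, its effect on the stopping time is global, since a new early record can change which element is accepted arbitrarily late in the process, so the contributions are not even local at the level of events. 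The published proof in~\cite{FW} does not proceed by such poset-to-poset surgery but by a direct global analysis of the threshold rule on the given poset; some such new idea is exactly what is missing here. Verdict: the strategy and the easy lemmas are right, but there is a genuine gap at the heart of the argument.
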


We remark that the secretary problem on a poset with $k$ maximal elements was also considered recently (and independently of this work) by Kumar, Lattanzi, Vassilvitskii and Vattani~\cite{KLVV}, who obtained similar results via a different method. The poset consisting of $k$ disjoint chains was also studied by Kuchta and Morayne~\cite{KM}, but with a restriction on the order in which the elements are observed: those from the first chain all appear in a random order, then those from the second chain, and so on. This poset is also related to multicriteria extensions of the secretary problem. In the original multicriteria version, each element is ranked independently in $k > 1$ different criteria, and the selector wishes to select an element that is maximal in at least one of them.  This is equivalent to the problem on $k$ equally-sized disjoint chains with the elements appearing one at a time from each chain in the same cyclic order.  This version was solved by Gnedin~\cite{Gn81}.  Gnedin has also produced a more general survey of multicriteria problems~\cite{Gn92}.  Interestingly, the asymptotic value of the probability of success in Theorem~\ref{thm:pk}, $\sqrt[k-1]{\frac{1}{k}}$, is the same as in the multicriteria version.

This paper is organized as follows.  In Section~\ref{sec:formal}, we shall introduce the formal model and some notation.  In Section~\ref{sec:lower}, we shall describe a (randomized) algorithm for choosing an element of our poset, and prove lower bounds for its probability of success for various families of posets.  In Section~\ref{sec:upper}, we shall show that our bounds are best possible, by proving that, for the poset that consists of $k$ disjoint chains of length $n$ (which lies in each of these families), there is no strategy that wins with probability greater than $p_k + o(1)$ (as $n \to \infty$).

\section{Formal model and notation} \label{sec:formal}

We begin by defining formally the probability space in which we shall work throughout the paper.  The reader who wishes to avoid technicalities on a first reading is encouraged to skip this section, since all crucial definitions will be restated when used.

Our probability space will depend on a poset $(P,\prec)$ with $P = \{x_1,\ldots,x_n\}$.  Let $\max_{\prec}(P)$ denote the set of its maximal elements, that is,
\[
{\max}_{\prec}(P) = \{x \in P : \not\exists y \text{ such that } x \prec y\}.
\]
We shall suppress the subscript in $\max_{\prec}$ when it is clear from the context.

Given $(P,\prec)$, we shall work with a probability space $(\Omega_P, \Alg_P, \Prob_P)$, with $\Exp_P$ defined in the obvious way.  We shall suppress the subscripts when they are clear from the context, as they will be for the rest of this section.  We define the probability space $(\Omega, \Alg, \Prob)$ as follows.  Set $\Omega = S_n \times [0,1]$, where $S_n$ is the permutation group on $[n]$, and $\Alg = \Pow(S_n) \times \Borel$, where $\Borel$ is the Borel $\sigma$-algebra.  Let $\Prob = \mu \times \lambda$, where $\mu$ is the uniform probability measure, that is,
\[
\mu(\{\sigma\}) = \frac{1}{n!}
\]
for all $\sigma \in S_n$, and $\lambda$ is the Lebesgue measure.  In other words, $(\sigma,\delta) \in \Omega$ is picked uniformly at random.  Given $(\sigma,\delta) \in \Omega$, the $\sigma$-co-ordinate will determine the order in which elements of $P$ appear and the $\delta$-co-ordinate will allow us to introduce randomness independent of this order into our algorithms.  Specifically, the $\delta$-co-ordinate will determine an initial number of elements to reject without considering them.  The reason why we are using continuous space and Lebesgue measure, despite the fact that all of our randomized strategies pick one of a finite number of options, is that this allows them all to lie in the same probability space.

Write $P^{[n]}$ for the set of permutations of $P$, and let $\pi : \Omega \to P^{[n]}$ be the random variable defined by
\[
\pi(\sigma,\delta)(i) = x_{\sigma(i)}.
\]
Let $\Pos_t$ denote the set of all posets with vertex set $[t] = \{1,\ldots,t\}$.  Let $(P_t)_{t \in [n]}$ be a family of random variables with $P_t$ representing the poset we see at time $t$.  Formally, $P_t : \Omega \to \Pos_t$ and each $P_t(\sigma,\delta) = ([t],\prec_t)$ is defined by
\[
\forall i,j \in [t], i \prec_t j \Longleftrightarrow \pi(i) \prec \pi(j).
\]
The poset $P_t$ is the natural description of what we see at time $t$ as the elements of $P$ appear one by one.

Let $(\Alg_t)_{t \in [n]}$ be the sequence of $\sigma$-algebras with each $\Alg_t$ generated by the random variables $P_1,\ldots,P_t$, that is,
\[
\Alg_t = \sigma(P_1, \ldots, P_t) = \sigma(P_t),
\]
the second equality holding since $P_t$ is a labelled poset and so its value determines the values of $P_1,\ldots,P_{t-1}$.  We think of $\Alg_t$ as the information we know at time $t$ about where we are in the universe $\Omega$.  Since $P_t$ takes only finitely-many values, $\Alg_t$ has a simple structure; it is the pre-images in $\Omega$ of the possible values of $P_t$ and the unions of these pre-images.  We call these pre-images the atoms of $\Alg_t$.

Let $\Alg_t'$ be the projection of $\Alg_t$ onto $\Pow(S_n)$.  Since our definitions have so far depended only on the $\sigma$-co-ordinate of $(\sigma,\delta) \in \Omega$, we see that, for each $t$,
\[
\Alg_t = \{A \times [0,1] : A \in \Alg_t'\}.
\]
In other words, $(\sigma_1,\delta_1)$ and $(\sigma_2,\delta_2)$ are in the same atom of $\Alg_t$ if and only if $\sigma_1$ and $\sigma_2$ are in the same atom of $\Alg_t'$, which happens if and only if the labelled posets induced by the first $t$ elements $\pi(1),\ldots,\pi(t)$ are identical.

By a \emph{stopping time}, we mean a random variable $\tau$ taking values in $[n]$ and satisfying the property
\[
\{\tau = t\} \in \Alg_t,
\]
that is, our decision to stop at time $t$ is based only on the values of $P_1,\ldots,P_t$.

We shall need to refer to conditional expectation and probability, which in the finite world are trivial, intuitive concepts.  We define a family of random variables $(Z_t)_{t \in [n]}$ by
\[
Z_t = \Prob\big[ \pi(t) \in \max(P) \,|\, \Alg_t \big],
\]
that is, $Z_t$ is the probability that the $t$\th\ element observed is maximal given $P_1,\ldots,P_t$.  Our aim will be to choose a stopping time $\tau$ to maximize $\Prob\big[\pi(\tau) \in \max(P)\big]$.  The value of $\Prob\big[\pi(\tau) \in \max(P)\big]$ can be easily shown to be equal to $\Exp(Z_\tau)$ -- see page 45 of Chow, Robbins and Siegmund~\cite{CRS}, for example.  These equivalent formulations will be useful later.

Recall that $\Alg_t'$ is the projection of $\Alg_t$ onto $\Pow(S_n)$.  By a \emph{randomized stopping time}, we mean a random variable $\tau$ taking values in $[n]$ and satisfying the property
\[
\{\tau = t\} \in \Alg_t' \times \Borel,
\]
that is, our decision to stop at time $t$ is based on the values of $P_1,\ldots,P_t$ and on some $\Borel$-measurable random variable.  The randomized stopping times that we shall consider will be convex combinations of a finite number of true stopping times, so if such a randomized stopping time gives a certain probability of success, then there is a true stopping time with at least that probability of success.

\section{Lower bounds} \label{sec:lower}

Throughout this section, $p$ is a real number satisfying $0 < p < 1$.  Recall that $\pi(t)$ is the $t$\th\ element of the poset $P$ that we see, and that $P_t$ is a poset with vertex set $[t]$ that is isomorphic to the poset seen at time $t$.  We shall prove lower bounds for the probability of success of the following randomized algorithm on different families of posets.

\begin{alg}
Given a poset with $n$ elements, of which $k$ are maximal, let $X(p)\sim\text{Bin}(n,p)$.  Reject the first $X(p)$ elements and accept the first subsequent element where the following condition holds: the poset induced by the elements seen so far (including the currently observed element) has at most $k$ maximal elements and the currently observed element is one of them.
\end{alg}

This algorithm gives rise to the following stopping time, $\tau_k(p)$.

Let $X(p) : \Omega \to \{0,\ldots,n\}$ be the random variable defined by
\[
X(p)(\sigma,\delta) = \min \left\{x \geq 0 : \sum_{i=0}^x {n \choose i} p^i (1-p)^{n-i} \geq \delta\right\},
\]
so that
\[
\Prob(X(p)=x) = {n \choose x} p^x (1-p)^{n-x}
\]
and $X(p) = X(p)(\sigma,\delta)$ is independent of $\sigma$.  Then $\tau_k(p)$ is defined by
\[
\tau_k(p) = \left\{
\begin{array} { c @{\quad \text{} }l} 
\min\big\{t > X(p) : |\max(P_t)| \leq k \text{ and } t \in \max(P_t)\big\} & \text{if this exists},\\
	n & \text{otherwise}.
\end{array}\right.
\]

Given the definition of $\tau_k(p)$, it makes sense to consider another random variable, the set of $X(p)$ elements that we reject without considering.  We denote this random variable by $S(p)$, where
\[
S(p) = \{\pi(t) : t \leq X(p)\}.
\]

We shall make use of the following simple property of $S(p)$, which is easily verified.

\begin{lemma} \label{lem:independence}
The events $\{x \in S(p)\}_{x \in P}$ are independent and $\Prob(x \in S(p)) = p$ for all $x \in P$.
\end{lemma}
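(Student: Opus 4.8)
The plan is to reduce the entire statement to a single clean identity: for every subset $A \subseteq P$,
\[
\Prob(S(p) = A) = p^{|A|}(1-p)^{n - |A|}.
\]
Once this is established, both conclusions follow at once, since this is precisely the law of the random set obtained by including each element of $P$ independently with probability $p$. In particular the indicators $(\Ind[x \in S(p)])_{x \in P}$ are then independent Bernoulli$(p)$ variables, which simultaneously yields the independence of the events $\{x \in S(p)\}$ and the marginals $\Prob(x \in S(p)) = p$. So the real work is just to compute $\Prob(S(p) = A)$.

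To do this, I would first exploit the product structure $\Omega = S_n \times [0,1]$. By construction $X(p)(\sigma,\delta)$ depends only on the $\delta$-co-ordinate, so $X(p)$ is independent of $\sigma$ and hence of $\pi$. This licenses conditioning on the value of $X(p)$: writing $a = |A|$, and noting that $S(p) = A$ forces $|S(p)| = X(p) = a$, we have
\[
\Prob(S(p) = A) = \Prob\big(X(p) = a\big)\,\Prob\big(S(p) = A \,\big|\, X(p) = a\big).
\]

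The key step is then to identify the conditional law of $S(p)$ given $X(p) = a$. Conditioned on $X(p) = a$, the set $S(p) = \{\pi(1),\ldots,\pi(a)\} = \{x_{\sigma(1)},\ldots,x_{\sigma(a)}\}$ is a function of $\sigma$ alone, and since $\sigma$ is a uniformly random permutation independent of $X(p)$, the set $\{\sigma(1),\ldots,\sigma(a)\}$ is a uniformly random $a$-subset of $[n]$. Hence $S(p)$ is uniform over the $a$-subsets of $P$, giving $\Prob(S(p) = A \mid X(p) = a) = 1/\binom{n}{a}$. Combining this with $\Prob(X(p) = a) = \binom{n}{a} p^a (1-p)^{n-a}$ yields the desired identity, the binomial coefficients cancelling.

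I do not expect any serious obstacle: the mathematical content is simply the standard equivalence between the ``binomial size, then uniform subset of that size'' description of $S(p)$ and the ``include each element independently'' model. The only points needing a little care are measure-theoretic, namely checking that $X(p)$ really is a function of $\delta$ alone (so that the conditioning and the independence from $\sigma$ are legitimate) and that the atoms of the relevant $\sigma$-algebras behave as expected under the product measure $\mu \times \lambda$; both are immediate from the explicit definitions of $X(p)$ and $\pi$ given in Section~\ref{sec:formal}.
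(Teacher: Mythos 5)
Your proof is correct, but it runs in the opposite direction to the paper's. The paper's proof is a re-generation (coupling) argument: it \emph{constructs} the pair $(\pi, X(p))$ from the independent-inclusion model --- put each element of $P$ into $S(p)$ independently with probability $p$, then let $\pi$ be a uniform ordering of $S(p)$ followed by a uniform ordering of $P \setminus S(p)$ --- and observes by symmetry that this reproduces the required joint distribution, so the independence properties hold by construction. You instead start from the definitions as given and \emph{compute} the law of $S(p)$ directly: conditioning on $X(p) = a$ (legitimate since $X(p)$ is a function of $\delta$ alone, hence independent of $\sigma$), the set $\{\sigma(1),\ldots,\sigma(a)\}$ is a uniform $a$-subset, and the binomial coefficients cancel to give $\Prob(S(p) = A) = p^{|A|}(1-p)^{n-|A|}$, which is exactly the product law. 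Both arguments rest on the same standard equivalence between ``binomial size, then uniform subset of that size'' and ``independent inclusion,'' but they trade off differently: your forward computation is fully explicit and avoids the unverified ``by symmetry'' step in the paper's construction (one does need to check there that the constructed $\pi$ is uniform and independent of $|S(p)|$, which the paper leaves to the reader), while the paper's coupling is shorter and exhibits the independence structure directly rather than extracting it from a cancellation. Your one asserted step worth a line of justification --- that the image of the first $a$ positions of a uniform permutation is a uniform $a$-subset --- is standard and unproblematic, so there is no gap.
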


\begin{proof}
We can generate $\pi$ and $X(p)$ with the required distributions in the following way.  Put each element of $P$ in $S(p)$ with probability $p$ independently of all other elements.  Let $\pi$ consist of a uniformly random ordering of the elements of $S(p)$ followed by a uniformly random ordering of $P \setminus S(p)$.  By symmetry, $\pi$ is a uniformly random ordering of $P$, and $X(p) = |S(p)|$ is a binomial random variable independent of $\pi$.  The events $\{x \in S(p)\}_{x \in P}$ depend only on $\pi$ and $X(p)$, and by construction the properties in the statement of the lemma hold.
\end{proof}

We shall also use the following standard identity; for completeness we include a proof.

\begin{lemma} \label{lem:identity}
For all integers $k \geq 1$, the following holds:
\[
\sum_{s = 0}^{\infty} {k + s - 1 \choose k-1} (1-p)^s = \frac{1}{p^k}.
\]
\end{lemma}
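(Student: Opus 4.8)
The plan is to prove this standard identity by induction on $k$. First I would substitute $q = 1-p$, so that $0 < q < 1$, and set $g_k = \sum_{s=0}^{\infty} {k+s-1 \choose k-1} q^s$; the claim then becomes $g_k = (1-q)^{-k} = p^{-k}$. Every term in the series is nonnegative, and since ${k+s-1 \choose k-1} = \frac{(s+1)(s+2)\cdots(s+k-1)}{(k-1)!} \leq (s+1)^{k-1}$ grows only polynomially in $s$ while $q^s$ decays geometrically, the series converges absolutely for each fixed $k$; this disposes of all convergence and rearrangement issues in one go. For the base case $k=1$ the coefficient ${s \choose 0} = 1$, so $g_1 = \sum_{s=0}^{\infty} q^s = (1-q)^{-1} = p^{-1}$, which is just the geometric series.

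For the inductive step I would pass from $g_k$ to $g_{k+1}$ by multiplying by the geometric series $\sum_{m=0}^{\infty} q^m = p^{-1}$ and comparing coefficients. Using the hockey-stick identity ${k+s \choose k} = \sum_{j=0}^{s} {k+j-1 \choose k-1}$, the $q^s$-coefficient of the Cauchy product of $g_k = \sum_j {k+j-1 \choose k-1} q^j$ with $\sum_m q^m$ is exactly $\sum_{j=0}^{s} {k+j-1 \choose k-1} = {k+s \choose k}$, which is the $q^s$-coefficient of $g_{k+1}$. Hence $g_{k+1} = g_k \cdot p^{-1}$, and combining this with the inductive hypothesis $g_k = p^{-k}$ gives $g_{k+1} = p^{-(k+1)}$, closing the induction. (An equally clean alternative would be to differentiate the identity for $k$ with respect to $q$ and check, via the relation $\frac{1}{k}{k+t \choose k-1}(t+1) = {k+t \choose k}$, that $\frac{1}{k} g_k' = g_{k+1}$, matching $\frac{d}{dq}(1-q)^{-k} = k(1-q)^{-(k+1)}$.)

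The main obstacle here is purely bookkeeping rather than conceptual: one has to justify the term-by-term multiplication of the two power series. Since all terms are nonnegative and both series converge, this is immediate (Tonelli, or the standard theorem on products of absolutely convergent series), so there is really no hard step. For this reason I would also record the slicker probabilistic proof, which fits the flavour of the paper and makes the identity transparent: flipping a coin with success probability $p$ until the $k$\th\ success, the probability of seeing exactly $s$ failures beforehand is ${k+s-1 \choose k-1} p^k q^s$ (choose which of the first $k+s-1$ flips are the $k-1$ earlier successes, the final flip being a success). Since $k$ successes occur eventually with probability $1$, these probabilities sum to $1$, and dividing by $p^k$ yields $\sum_{s=0}^{\infty} {k+s-1 \choose k-1} q^s = p^{-k}$ at once.
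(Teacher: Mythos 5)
Your proposal is correct, and your primary route is genuinely different from the paper's. The paper proves the identity in a single paragraph by exactly the probabilistic argument you relegate to a closing aside: toss a coin with heads-probability $p$ infinitely often; the $k$th head falls in position $k+s$ with probability $\binom{k+s-1}{k-1}p^k(1-p)^s$ (the negative binomial distribution), and since at least $k$ heads occur with probability $1$, these probabilities sum to $1$, which is the identity after dividing by $p^k$. Your main proof instead runs an induction on $k$: the base case is the geometric series, and the step multiplies $g_k$ by $\sum_m q^m = p^{-1}$ and matches Cauchy-product coefficients via the hockey-stick identity $\sum_{j=0}^{s}\binom{k+j-1}{k-1} = \binom{k+s}{k}$, with absolute convergence (polynomial coefficients against geometric decay) licensing the rearrangement; this is complete and correct, as is your parenthetical differentiation variant $\frac{1}{k}g_k' = g_{k+1}$, which is essentially the inverse of the manipulation the paper performs later when it differentiates $V_k(p)$ and invokes this lemma inside the proof of Theorem~\ref{thm:lower_bound}. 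What each approach buys: your induction is self-contained and purely formal, making the analytic bookkeeping (convergence, term-by-term multiplication) explicit, where the paper's argument quietly relies on the almost-sure occurrence of $k$ heads and countable additivity; the paper's proof buys brevity and transparency and matches the probabilistic flavour of the surrounding computation. Since you also recorded the coin-tossing proof, you have in fact reproduced the paper's argument verbatim in spirit, with the induction as an independent, more elementary alternative.
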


\begin{proof}
Suppose that we have a coin that comes up heads with probability $p$ and tails with probability $1-p$, and that we toss it infinitely many times.  Then, with probability 1, we shall see at least $k$ heads, and the $k$\th\ head comes up in position $k+s$ for some $s \geq 0$.  In this case, we know that $k-1$ of the first $k+s-1$ tosses are heads and the remaining $s$ are tails, and so, summing over the probabilities that the $k$\th\ head comes in each position, we have
\[
\sum_{s = 0}^{\infty} {k + s - 1 \choose k-1} p^k (1-p)^s = 1,
\]
as required.
\end{proof}

In order to prove Theorem~\ref{thm:pk}, we first calculate a lower bound for the probability that $\tau_k(p)$ is successful on the poset consisting of $k$ disjoint chains.  Recall that $p$ is a real number satisfying $0 < p < 1$ and that $\pi(\tau_k(p))$ is the element that the algorithm $\tau_k(p)$ selects.

\setlength{\unitlength}{1pt}
\begin{figure}[htp]
\begin{center}
\begin{picture}(126,150)(-15,-6)
\multiput(12,36)(0,24){5}{\circle*{4}}
\put(12,60){\line(0,1){72}}
\multiput(12,44)(0,4){3}{\circle*{1}}
\put(12,36){\circle{10}}
\put(12,84){\circle{10}}
\put(12,132){\circle{10}}
\multiput(36,12)(0,24){6}{\circle*{4}}
\put(36,36){\line(0,1){96}}
\multiput(36,20)(0,4){3}{\circle*{1}}
\put(36,36){\circle{10}}
\put(36,60){\circle{10}}
\multiput(52,96)(8,0){3}{\circle*{1}}
\multiput(84,60)(0,24){4}{\circle*{4}}
\put(84,84){\line(0,1){48}}
\multiput(84,68)(0,4){3}{\circle*{1}}
\put(84,108){\circle{10}}
\qbezier(24,132)(24,144)(36,144)
\put(36,144){\line(1,0){48}}
\qbezier(84,144)(96,144)(96,132)
\qbezier(96,132)(96,120)(84,120)
\put(84,120){\line(-1,0){24}}
\qbezier(60,120)(48,120)(48,108)
\put(48,108){\line(0,-1){24}}
\qbezier(48,84)(48,72)(36,72)
\qbezier(36,72)(24,72)(24,84)
\put(24,84){\line(0,1){48}}
\put(-6,84){\vector(0,1){60}}
\put(-6,84){\vector(0,-1){60}}
\put(109,132){\vector(0,1){12}}
\put(109,132){\vector(0,-1){12}}
\put(-15,82){\makebox[6pt][r]{$m_1$}}
\put(112,130){\makebox[6pt][l]{$j_k$}}
\put(8,-6){\makebox[8pt][l]{$C_1$}}
\put(32,-6){\makebox[8pt][l]{$C_2$}}
\put(80,-6){\makebox[8pt][l]{$C_k$}}
\end{picture}
\end{center}
\caption{An example of $k$ disjoint chains with the elements of $S(p)$ circled.  This illustrates an instance of the event $A_{0,3,\ldots,1}$.  The region enclosed by the solid curve marks the $j_1 + \ldots + j_k$ elements that might be selected.\label{fig:disjoint_chains}}
\end{figure}

\begin{theorem} \label{thm:lower_bound}
Let $(P,\prec)$ be a poset consisting of $k$ disjoint chains.  Then
\[
\Prob\big[ \pi(\tau_k(p)) \in \max(P) \big] >  \left\{
\begin{array} { c @{\quad \text{} }l} 
p \log \frac{1}{p} &\text{if } k = 1, \\
	\frac{k}{k-1} p (1-p^{k-1}) &\text{if } k > 1.
\end{array}\right.
\]
\end{theorem}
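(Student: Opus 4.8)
The plan is to pin down exactly how $\tau_k(p)$ acts on $k$ disjoint chains and thereby reduce its probability of success to a single expectation that can be evaluated in closed form. First I would note that on $k$ disjoint chains the condition $|\max(P_t)| \le k$ in the algorithm is automatic: the maximal elements of $P_t$ are precisely the highest element seen so far in each chain that has been touched, so there are always at most $k$ of them. Hence $\tau_k(p)$ simply accepts the first element, after the rejection phase, that is a record within its own chain. For each chain $C_i$ let $j_i$ denote the length of the maximal run of non-rejected (i.e.\ not in $S(p)$) elements at the top of $C_i$, and call these $j_1+\cdots+j_k$ elements the candidates; this is the configuration recorded by the events $A_{j_1,\ldots,j_k}$ of Figure~\ref{fig:disjoint_chains}. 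A short argument shows that the element selected is exactly the first candidate to appear: any candidate that appears is a chain record, since nothing above it in its chain has yet been seen, whereas any post-rejection element that is not a candidate lies strictly below the already-observed highest rejected element of its chain and so is not a record.

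Next I would compute the conditional success probability. By Lemma~\ref{lem:independence} each element lies in $S(p)$ independently with probability $p$, so the $j_i$ are independent, with $j_i$ distributed as a geometric variable truncated at the length of $C_i$, namely $\Prob(j_i=\ell)=(1-p)^\ell p$ for $\ell$ below that length. Conditioning on $S(p)$, the non-rejected elements appear in a uniformly random relative order, so the first candidate to appear is uniform among all $J := j_1+\cdots+j_k$ candidates. Success means selecting a chain top, and the candidates that are chain tops are exactly one per chain with $j_i\ge 1$; so, writing $N := |\{i : j_i\ge 1\}|$, the conditional success probability given $(j_1,\ldots,j_k)$ is $N/J$ (and $0$ when $J=0$). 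Therefore $\Prob\big[\pi(\tau_k(p))\in\max(P)\big]=\Exp[N/J]$.

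I would then evaluate this expectation in the idealized case of infinitely long chains, where each $j_i$ is genuinely geometric. Using $1/J=\int_0^1 x^{J-1}\,\mathrm{d}x$ and factoring $\Exp[N x^J]$ through the generating function $\Exp[x^{j_i}]=p/(1-(1-p)x)$, the whole sum collapses --- the identity of Lemma~\ref{lem:identity} being exactly what resums the resulting negative-binomial series --- to $\int_0^1 k(1-p)p^k(1-(1-p)x)^{-k}\,\mathrm{d}x$, which integrates to $\tfrac{k}{k-1}p(1-p^{k-1})$ for $k>1$. For $k=1$ one has instead $N/J=\Ind[j_1\ge1]/j_1$, giving $\sum_{\ell\ge1}p(1-p)^\ell/\ell=p\log\tfrac1p$.

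Finally, to pass from infinite to finite chains and to obtain the \emph{strict} inequality uniformly in the chain lengths, I would couple the truncated variables to the geometric ones by setting $j_i=\min(g_i,n_i)$, where $g_i$ is geometric and $n_i$ is the length of $C_i$. Since $n_i\ge1$, truncation never affects whether $j_i\ge1$, so $N$ is unchanged while $J$ can only decrease; hence $N/J$ increases pointwise (with $0/0=0$), strictly on the positive-probability event $\{g_i>n_i\text{ for some }i\}$. Taking expectations gives the claimed strict lower bound. The step I expect to be the main obstacle is the exact resummation in the infinite-chain computation --- getting the double sum over $(j_1,\ldots,j_k)$ into the clean form $\tfrac{k}{k-1}p(1-p^{k-1})$ --- together with the care needed to make the comparison with the infinite case yield a bound that is both strict and independent of the individual chain lengths.
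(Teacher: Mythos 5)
Your proposal is correct, and it shares the paper's combinatorial core: the decomposition according to the top runs $j_1,\ldots,j_k$ of non-rejected elements (the paper's events $A_{j_1,\ldots,j_k}$), the observation that the algorithm accepts the first candidate to appear, and the conditional success probability $N/J$. Where you genuinely diverge is in the evaluation. The paper stays with the finite sum throughout: it absorbs the truncation factor $p^{|\{i\,:\,j_i<m_i\}|}$ by expanding geometric series (turning truncated tuples into unbounded ones at the cost of replacing the denominator by $\min\{j_1,m_1\}+\cdots+\min\{j_k,m_k\}$, whence one of its strict inequalities), reorganizes over $r=|\{i:j_i>0\}|$ and $s=j_1+\cdots+j_k$, applies Vandermonde's identity, and evaluates $V_k(p)=\sum_{s\geq 1}\frac{1}{s}(1-p)^s\binom{k+s-2}{k-1}$ by differentiating, invoking Lemma~\ref{lem:identity}, and integrating back. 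You instead (a) reduce the finite chains to genuinely geometric $j_i$ by the coupling $j_i=\min(g_i,n_i)$, under which $N$ is unchanged while $J$ only grows, so strictness follows from the positive-probability event $\{g_i>n_i\text{ for some }i\}$ (on which indeed $N\geq 1$, so the pointwise inequality is strict); and (b) compute $\Exp[N/J]$ via $1/J=\int_0^1 x^{J-1}\,\textrm{d}x$ and the generating function $\Exp[x^{j_i}]=p/(1-(1-p)x)$. Both of your computations check out: the integral $\int_0^1 k(1-p)p^k(1-(1-p)x)^{-k}\,\textrm{d}x$ gives $\frac{k}{k-1}p(1-p^{k-1})$ for $k>1$, and the $k=1$ series gives $p\log\frac{1}{p}$. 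Your route buys a cleaner, more conceptual treatment of the truncation and of strictness; the paper's buys a single self-contained chain of elementary series manipulations.

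One small inaccuracy: the identity $\Prob\big[\pi(\tau_k(p))\in\max(P)\big]=\Exp[N/J]$ is not exact. When $S(p)=P$, the stopping time defaults to $\tau_k(p)=n$ and the forced acceptance of $\pi(n)$ succeeds with probability $\frac{k}{n}p^n$; in fact $\Exp[N/J]=\Prob\big[(S(p)\neq P)\wedge(\pi(\tau_k(p))\in\max(P))\big]$, so the correct statement is $\Prob[\text{success}]=\Exp[N/J]+\frac{k}{n}p^n>\Exp[N/J]$. This only strengthens your lower bound, so nothing breaks --- and, as it happens, the paper deliberately proves its bound for this same smaller event, since that refinement is exactly what is needed later in the proof of Lemma~\ref{lem:conditional_win}.
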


\begin{proof}
We first note that $\pi(\tau_k(p)) \in \max(P)$ in the exceptional case where $S(p) = P$ and $\pi(\tau_k(p)) = \pi(n) \in \max(P)$, an event with probability $\frac{k}{n}p^n$.  This tends to 0 as $n \to \infty$, and we obtain the bounds in the theorem by considering only the cases where $X(p) < n$ and hence $\pi(\tau_k(p)) \not\in S(p)$.  However, when we come to the proof of Lemma~\ref{lem:conditional_win}, the fact that these bounds are for a slightly smaller event will be important.

Let the $k$ chains be denoted by $C_1, \ldots, C_k$ and have lengths $m_1, \ldots , m_k$.  Let $A_{j_1, \ldots , j_k}$ be the event that for each $i$ there are $j_i$ elements from $C_i$ not in $S(p)$ above the highest element from $C_i$ in $S(p)$ (see Figure~\ref{fig:disjoint_chains}), that is,
\[
A_{j_1, \ldots , j_k} = \bigwedge_{i=1}^k \Big( \big| \big\{x \in C_i \backslash S(p) : \not\exists y \in C_i \cap S(p) \text{ such that } x \prec y \big\} \big| = j_i \Big).
\]
For $j_i < m_i$, this means that the top $j_i$ elements are not in $S(p)$ but the $(j_i + 1)$\st\ is.  For $j_i = m_i$, this means that there are no elements from the $i$\th\ chain in $S(p)$.  Note that if $A_{j_1, \ldots , j_k}$ occurs then $\pi(\tau_k(p))$ will be the first element observed from the $j_1 + \ldots + j_k$ elements not in $S(p)$ that are at the tops of their chains.

The events $\big\{ A_{j_1, \ldots , j_k} \,:\, 0 \leq j_1 \leq m_1, \ldots, 0 \leq j_k \leq m_k \big\}$ partition the whole space.  Thus, writing $Q_k(p)$ for $\Prob\big[\pi(\tau_k(p)) \in \max(P)\big]$,
\begin{align}
Q_k(p) & \, = \, \sum_{0 \leq j_1 \leq m_1, \ldots, 0 \leq j_k \leq m_k} \Prob\big[ \pi(\tau_k(p)) \in \max(P) \,|\, A_{j_1, \ldots , j_k} \big] \cdot \Prob\big[ A_{j_1, \ldots , j_k} \big] \notag \\
& \, > \, \sum_{\substack{0 \leq j_1 \leq m_1, \ldots, 0 \leq j_k \leq m_k \\ (j_1,\ldots,j_k) \neq (0,\ldots,0)}} \frac{|\{i:j_i > 0\}|}{j_1 + \ldots + j_k}(1-p)^{j_1 + \ldots + j_k}p^{|\{i:j_i < m_i\}|}. \label{eqn:raw_formula}
\end{align}
Since $1 + (1-p) + (1-p)^2 + \ldots = \frac{1}{p}$, this can be written as
\begin{eqnarray}
Q_k(p) & > & \sum_{\substack{ 0 \leq j_1 \leq m_1, \ldots, 0 \leq j_k \leq m_k \\ (j_1,\ldots,j_k) \neq (0,\ldots,0)}} \frac{ | \{ i:j_i > 0\} |}{ j_1 + \ldots + j_k} (1-p)^{j_1 + \ldots + j_k} p^k (1 + (1-p) + (1-p)^2 + \ldots)^{|\{i:j_i = m_i\}|} \nonumber \\
& = & \sum_{\substack{j_1, \ldots, j_k \geq 0 \\ (j_1,\ldots,j_k) \neq (0,\ldots,0)}} \frac{|\{i:j_i > 0\}|}{\min\{j_1,m_1\} + \ldots + \min\{j_k,m_k\}}(1-p)^{j_1 + \ldots + j_k}p^k \nonumber \\
& > & \sum_{\substack{j_1, \ldots, j_k \geq 0 \\ (j_1,\ldots,j_k) \neq (0,\ldots,0)}} \frac{|\{i:j_i > 0\}|}{j_1 + \ldots + j_k}(1-p)^{j_1 + \ldots + j_k}p^k. \label{eq:Q}
\end{eqnarray}
To see the equality, simply note that the term corresponding to $(j_1,\ldots,j_k)$ on the right-hand side appears on the left-hand side by choosing the term $(1-p)^{j_i-m_i}$ in the sum whenever $j_i \geq m_i$.

We now rewrite~\eqref{eq:Q} as a sum over $r = |\{i:j_i > 0\}|$ and $s = j_1 + \ldots + j_k$, and obtain
\[
Q_k(p) \, > \, \sum_{r = 1}^k \sum_{s = r}^\infty \Big| \Big\{(j_1,\ldots,j_k) : \big|\{i : j_i > 0\}\big| = r \text{ and } j_1+\ldots +j_k = s \Big\}\Big| \cdot  \frac{r}{s}(1-p)^sp^k.
\]

The rest of the proof is a straightforward calculation.  To calculate $\big|\big\{(j_1,\ldots,j_k) : \big|\{i : j_i > 0\}\big| = r \text{ and } j_1+\ldots +j_k = s\big\}\big|$, we note that there are ${k \choose r}$ ways of choosing the indices $i$ with $j_i > 0$ and there are then ${s-1 \choose r-1}$ ways for $r$ non-zero numbers to add up to $s$.  Thus
\[
Q_k(p) \; > \; \sum_{r = 1}^k \sum_{s = r}^\infty {k \choose r} {s - 1 \choose r - 1} \frac{r}{s} (1-p)^s p^k \; = \; kp^k \sum_{r = 1}^k \sum_{s = r}^\infty {k - 1 \choose r - 1} {s - 1 \choose r - 1} \frac{1}{s} (1-p)^s.
\]
Reversing the order of summation,
\[
Q_k(p) \; > \; kp^k \sum_{s = 1}^\infty \frac{1}{s} (1-p)^s \sum_{r = 1}^{\min\{k,s\}} {s - 1 \choose r - 1} {k - 1 \choose k - r}.
\]
The second sum is easily evaluated (a result known as Vandermonde's identity) to give
\begin{equation}
Q_k(p) > kp^k \sum_{s = 1}^\infty \frac{1}{s} (1-p)^s {k + s - 2 \choose k - 1}. \label{eqn:lower_bound}
\end{equation}

Finally, let us evaluate the sum in the above equation.  Write
\[
V_k(p) = \sum_{s = 1}^\infty \frac{1}{s} (1-p)^s {k + s - 2 \choose k - 1}.
\]
Differentiating, and then applying Lemma~\ref{lem:identity}, we find that
\[
\dd{V_k(p)}{p} \; = \;- \sum_{s = 1}^\infty (1-p)^{s-1} {k + s - 2 \choose k - 1} \;  = \; - \frac{1}{p^k}.
\]
We now integrate to obtain
\[
V_k(p) = \left\{
\begin{array} { c @{\quad \text{} }l} 
- \log{p} + c_1 &\text{if } k = 1, \\[+0.5ex]
	\frac{1}{(k-1)p^{k-1}} + c_k &\text{if } k > 1,
	\end{array}\right.
\]
where the $c_k$ are constants.  Since the expressions above are continuous in $p$ in the interval $(0,1]$, we may consider limits as $p \to 1$ to find $c_k$ and deduce that
\[
V_k(p) = \sum_{s = 1}^\infty \frac{1}{s} (1-p)^s {k + s - 2 \choose s - 1} = \left\{
\begin{array} { c @{\quad \text{} }l} 
\log \frac{1}{p} &\text{if } k = 1, \\
	\frac{1}{k-1}\left(\frac{1}{p^{k-1}} - 1\right) &\text{if } k > 1.	
	\end{array}\right.
\]
Substituting the value of $V_k(p)$ into \eqref{eqn:lower_bound} gives the result.
\end{proof}

In order to extend the result above to posets whose width is the same as their number of maximal elements, we shall use Dilworth's theorem~\cite{Dil} (see also page 81 of~\cite{MGT}):

\begin{dilworth}
A poset with largest antichain of size $k$ can be covered by $k$ chains.
\end{dilworth}

In the next theorem, we shall show that the secretary problem is no harder on a poset with $k$ maximal elements and width $k$ than on a poset consisting of $k$ disjoint chains.

\setlength{\unitlength}{1pt}
\begin{figure}[htp]
\begin{center}
\begin{picture}(126,150)(-15,-6)
\multiput(12,36)(0,24){5}{\circle*{4}}
\put(12,60){\line(0,1){72}}
\multiput(12,44)(0,4){3}{\circle*{1}}
\put(12,36){\circle{10}}
\put(12,84){\circle{10}}
\put(12,132){\circle{10}}
\multiput(36,12)(0,24){6}{\circle*{4}}
\put(36,36){\line(0,1){96}}
\multiput(36,20)(0,4){3}{\circle*{1}}
\put(36,36){\circle{10}}
\put(36,60){\circle{10}}
\multiput(52,96)(8,0){3}{\circle*{1}}
\multiput(84,60)(0,24){4}{\circle*{4}}
\put(84,84){\line(0,1){48}}
\multiput(84,68)(0,4){3}{\circle*{1}}
\put(84,108){\circle{10}}
\put(12,132){\line(1,-2){24}}
\qbezier(24,132)(24,144)(36,144)
\put(36,144){\line(1,0){48}}
\qbezier(84,144)(96,144)(96,132)
\qbezier(96,132)(96,120)(84,120)
\put(84,120){\line(-1,0){24}}
\qbezier(60,120)(48,120)(48,108)
\qbezier(48,108)(48,96)(36,96)
\qbezier(36,96)(24,96)(24,108)
\put(24,108){\line(0,1){24}}
\multiput(24,84)(0,3){8}{\circle*{1}}
\multiput(48,84)(0,3){8}{\circle*{1}}
\put(25,81){\circle*{1}}
\put(26,78){\circle*{1}}
\put(28,76){\circle*{1}}
\put(30,74){\circle*{1}}
\put(33,73){\circle*{1}}
\put(36,72){\circle*{1}}
\put(39,73){\circle*{1}}
\put(42,74){\circle*{1}}
\put(44,76){\circle*{1}}
\put(46,78){\circle*{1}}
\put(47,81){\circle*{1}}
\put(-6,84){\vector(0,1){60}}
\put(-6,84){\vector(0,-1){60}}
\put(109,132){\vector(0,1){12}}
\put(109,132){\vector(0,-1){12}}
\put(-15,82){\makebox[6pt][r]{$m_1$}}
\put(112,130){\makebox[6pt][l]{$j_k$}}
\put(8,-6){\makebox[8pt][l]{$C_1$}}
\put(32,-6){\makebox[8pt][l]{$C_2$}}
\put(80,-6){\makebox[8pt][l]{$C_k$}}
\end{picture}
\end{center}
\caption{An example of $k$ disjoint chains with one extra comparison, and with elements of $S(p)$ circled.  The region enclosed by the solid curve marks the elements that might be selected. The element in the dotted region could have been selected if the extra comparison were not there---cf. Figure~\ref{fig:disjoint_chains}.\label{fig:extra_comparison}}
\end{figure}

\begin{theorem} \label{thm:no_antichain}
Let $(P,\prec)$ be a poset with $n$ elements.  Suppose that $(P,\prec)$ has $k$ maximal elements and that none of its antichains has size greater than $k$.  Then
\[
\Prob\big[\pi(\tau_k(p)) \in \max(P)\big] > \left\{
\begin{array} { c @{\quad \text{} }l} 
p \log \frac{1}{p} &\text{if } k = 1, \\
	\frac{k}{k-1} p (1-p^{k-1}) &\text{if } k > 1.
	\end{array}\right.
\]
\end{theorem}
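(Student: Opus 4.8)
\emph{Overall strategy.} The plan is to show that the relations present in $P$ but absent from a disjoint-chain poset can only help the algorithm, and thereby to reduce the statement to Theorem~\ref{thm:lower_bound}. First I would invoke Dilworth's theorem: since the largest antichain of $(P,\prec)$ has size $k$, the poset is covered by $k$ chains $C_1,\ldots,C_k$, of lengths $m_1,\ldots,m_k$ say. Because the $k$ maximal elements of $P$ form an antichain of size $k$ and no chain contains two incomparable elements, exactly one maximal element lies in each $C_i$; moreover the top of $C_i$ lies above this maximal element and hence, by maximality, must equal it. Thus the tops of the chains are precisely the $k$ maximal elements of $P$. I would then let $(P',\prec')$ be the poset on the same ground set obtained by keeping only the within-chain relations, so that $P'$ is the disjoint union of $C_1,\ldots,C_k$; then $\prec'$ is contained in $\prec$ and $\max(P') = \max(P)$.

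\emph{The coupling.} I would run $\tau_k(p)$ on $P$ and on $P'$ using the \emph{same} point $(\sigma,\delta)\in\Omega$, so that $\pi$, $X(p)$ and $S(p)$ coincide for the two posets, and prove the event inclusion
\[
\big\{ \pi(\tau_k(p)) \in \max(P') \text{ on } P' \big\} \subseteq \big\{ \pi(\tau_k(p)) \in \max(P) \text{ on } P \big\}.
\]
Call an element \emph{high} if it lies in some $C_i \setminus S(p)$ strictly above every element of $C_i \cap S(p)$; these are exactly the candidates counted in the events $A_{j_1,\ldots,j_k}$ of Theorem~\ref{thm:lower_bound}, and the winners among them are the chain tops. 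On the disjoint-chain poset $P'$ the algorithm selects the first high element observed, so ``win on $P'$'' means precisely that this first high element is a chain top, say $z_0$, observed at time $t_0$.

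\emph{Verifying the inclusion.} Assuming $P'$ wins with $z_0$ as above, I would check that the run on $P$ selects the same $z_0$. Any element $z$ observed at a time in $\{X(p)+1,\ldots,t_0-1\}$ is not high (as $z_0$ is the first high element) and not in $S(p)$, so it lies below the top element $w$ of $C_i\cap S(p)$ in its own chain; since $\prec'$ is contained in $\prec$ and $w$ was observed before time $t_0$, the element $z$ is not maximal in the poset induced on $P$ by the elements seen up to and including $z$, and so is not selected. At time $t_0$ the element $z_0$ is maximal in $P$, so nothing observed lies above it and $z_0$ is maximal in the induced poset; furthermore that poset has at most $k$ maximal elements, since they form an antichain and the width of any subposet of $P$ is at most $k$. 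Hence both conditions of the algorithm hold and $z_0$ is selected on $P$ as well, giving a win. The exceptional case $X(p)=n$ is immediate: there $\tau_k(p)=n$ on both posets and $\max(P')=\max(P)$ forces the two outcomes to agree.

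\emph{Conclusion and main difficulty.} Combining the inclusion with Theorem~\ref{thm:lower_bound} applied to the $k$ disjoint chains $C_1,\ldots,C_k$ gives $\Prob[\pi(\tau_k(p))\in\max(P)] \ge \Prob[\pi(\tau_k(p))\in\max(P')]$, and hence the claimed strict bounds. I expect the only delicate point to be the monotonicity underlying the inclusion: one must check that no non-high element is ever accepted on $P$ before $z_0$, and that chain tops, being maximal in $P$, are never blocked by the added relations. This is exactly the phenomenon that the dotted region in Figure~\ref{fig:extra_comparison} illustrates, where an extra comparison deletes a non-winning candidate but never a winner.
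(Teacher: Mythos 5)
Your proof is correct, but it is packaged differently from the paper's. The paper does not introduce the disjoint-union poset $(P',\prec')$ or any coupling: it simply re-runs the proof of Theorem~\ref{thm:lower_bound} on $(P,\prec)$ itself, keeping the partition into the events $A_{j_1,\ldots,j_k}$ (which, as you note, depend only on the chain decomposition and $S(p)$, so they still make sense), and observes that the only change is in the conditional probabilities: the pool of elements that might be accepted can only shrink, since extra comparisons may disqualify candidates but never the chain tops (which are exactly $\max(P)$), so the denominator in each term of~\eqref{eqn:raw_formula} is now at most $j_1+\cdots+j_k$ while the numerator $|\{i : j_i>0\}|$ is unchanged, and the remaining calculation goes through verbatim. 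You instead prove the stronger pathwise statement that, under the natural coupling by a common $(\sigma,\delta)$, a win for $\tau_k(p)$ on the disjoint chains forces a win on $(P,\prec)$, and then invoke Theorem~\ref{thm:lower_bound} as a black box; this is essentially the coupling technique the paper itself deploys in Lemma~\ref{lem:one_max_element}, there comparing $\prec$ to a linear extension, here to a sub-order. What your route buys is a cleaner logical structure---a sure-event inclusion implying the probability inequality, with no need to re-inspect the conditional computation---at the cost of the case analysis you carry out explicitly (first high element is accepted, earlier non-high elements are blocked by their chain's top $S(p)$-element since $\prec'\,\subseteq\,\prec$, and the $X(p)=n$ fallback). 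Both arguments ultimately rest on the same two facts, which you verify carefully: Dilworth's theorem places the $k$ maximal elements at the tops of the $k$ covering chains, and width at most $k$ makes the ``at most $k$ maximal elements'' clause of the algorithm vacuous, so acceptance is governed by maximality alone. Your handling of the exceptional cases matches the paper's implicit treatment, and the strictness of the final inequality is correctly inherited from Theorem~\ref{thm:lower_bound}.
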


\begin{proof}
By Dilworth's theorem, we see that $P$ takes the form of $k$ chains with some comparisons in between them.  Clearly, the $k$ elements of $\max(P)$ lie at the top of the $k$ chains.  The proof therefore proceeds in an almost identical manner to that of Theorem~\ref{thm:lower_bound}.  The only difference is that the denominator in each term of \eqref{eqn:raw_formula} is now at most, rather than equal to, $j_1 + \ldots + j_k$ (see Figure~\ref{fig:extra_comparison}), so the expression in this line is still a lower bound.  The calculations that make up the remainder of the proof of Theorem~\ref{thm:lower_bound} therefore follow in the same way.
\end{proof}

The values that maximize the function in Theorem~\ref{thm:no_antichain} are
\begin{equation}  \label{eqn:p_k}
p_k = \left\{ 
\begin{array} { c @{\quad \text{if } }l} \frac{1}{e}  & k=1, \\[+0.3ex]
\sqrt[k-1]{\frac{1}{k}} &  k > 1. 
\end{array}\right.
\end{equation}
This gives us the following corollary and the lower bounds in Theorem~\ref{thm:pk}.

\begin{corollary} \label{cor:lower_bound}
Let $(P,\prec)$ be a poset with $n$ elements. Suppose that $(P,\prec)$ has $k$ maximal elements and that none of its antichains has size greater than $k$.  Then
\[
\Prob\big[ \pi(\tau_k(p_k)) \in \max(P) \big] > p_k.
\]
\end{corollary}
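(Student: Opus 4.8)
The plan is to invoke Theorem~\ref{thm:no_antichain} directly, specializing its free parameter to the value $p = p_k$. A poset with $k$ maximal elements and no antichain larger than $k$ satisfies the hypotheses of that theorem for every admissible $p$, so it suffices to evaluate the lower bound it provides at the particular value $p_k$ defined in~\eqref{eqn:p_k} and to check that this value is precisely $p_k$. Thus the corollary reduces to an elementary verification that $p_k$ maximizes the relevant function and that the maximum equals $p_k$; the definition~\eqref{eqn:p_k} is really a claim about where these maxima are attained, and the corollary is the statement that evaluating there produces a self-referential bound.

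First I would treat the case $k=1$. Here the bound is $f_1(p) = p\log\frac{1}{p}$, with derivative $f_1'(p) = \log\frac1p - 1$, which vanishes exactly at $p = 1/e$; substituting gives $f_1(1/e) = \frac1e\log e = \frac1e = p_1$. For $k > 1$ the bound is $f_k(p) = \frac{k}{k-1}p(1 - p^{k-1}) = \frac{k}{k-1}(p - p^k)$, whose derivative $\frac{k}{k-1}(1 - kp^{k-1})$ vanishes exactly when $p^{k-1} = \frac1k$, that is, at $p = \sqrt[k-1]{1/k} = p_k$. Using the identity $p_k^{k-1} = \frac1k$, and hence $p_k^k = \frac{p_k}{k}$, I would compute $f_k(p_k) = \frac{k}{k-1}\bigl(p_k - \frac{p_k}{k}\bigr) = \frac{k}{k-1}\cdot p_k\cdot\frac{k-1}{k} = p_k$.

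Since the inequality in Theorem~\ref{thm:no_antichain} is strict, these two computations give $\Prob\big[\pi(\tau_k(p_k)) \in \max(P)\big] > p_k$ in both cases, which is exactly the claim. There is no substantial obstacle here: the only point worth confirming is that the stated $p_k$ is genuinely the maximizer of $f_k$ (a one-line sign check on $f_k'$, or noting concavity of $p - p^k$ on $(0,1)$), which justifies calling~\eqref{eqn:p_k} the optimal choice; but even setting optimality aside, the identity $f_k(p_k) = p_k$ on its own closes the argument. The entire proof is therefore a short exercise in single-variable calculus applied to the bound already established.
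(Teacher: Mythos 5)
Your proposal is correct and matches the paper's own (very brief) justification: the paper simply observes that $p_k$ maximizes the bound from Theorem~\ref{thm:no_antichain} and that substituting $p = p_k$ yields the value $p_k$, which is exactly the calculus verification you carry out, including the key identity $p_k^k = p_k/k$. You have merely supplied the routine details that the paper leaves implicit, so there is nothing further to add.
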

It is interesting to note that the expected proportion of elements that we reject without considering is the same as the probability of success.

We now wish to prove the following theorem, which, with the right choice of $p$, will give us a lower bound of $\frac{1}{e}$ for all posets, as in Theorem~\ref{thm:1e}.

\begin{theorem} \label{thm:known_max}
Let $(P,\prec)$ be a poset with $n$ elements.  Suppose that $(P,\prec)$ has $k$ maximal elements.  Then
\[
\Prob\big[\pi(\tau_k(p)) \in \max(P)\big] > kp^k\log \frac{1}{p}.
\]
\end{theorem}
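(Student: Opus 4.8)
The plan is to recast the algorithm using Lemma~\ref{lem:independence}: we may regard $S = S(p)$ as a subset of $P$ obtained by including each element independently with probability $p$, after which the remaining elements $T = P \setminus S$ are examined one at a time in a uniformly random order. In these terms the algorithm rejects all of $S$ and then accepts the first $y \in T$ that is maximal in the poset induced by the elements seen so far and for which that poset has at most $k$ maximal elements; the run succeeds precisely when this $y$ lies in $\max(P)$. Since at most one element is ever accepted, the events $\{a \text{ accepted}\}$ for $a \in \max(P)$ are disjoint, so it suffices to bound $\sum_{a \in \max(P)} \Prob[a \text{ accepted}]$ from below. I would aim to show that each maximal element contributes at least $p^{k}\log\frac1p$, and then assemble the answer using the identity $\log\frac1p = \sum_{s \ge 1}\frac{(1-p)^s}{s}$, which is exactly the evaluation of $V_1(p)$ carried out in the proof of Theorem~\ref{thm:lower_bound}.

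Fix $a \in \max(P)$ together with a maximal chain $a = c_0 \succ c_1 \succ c_2 \succ \cdots$ descending from $a$. The key reduction is to restrict to the favourable event that the other $k-1$ maximal elements of $P$ all lie in $S$, which by Lemma~\ref{lem:independence} costs only a factor $p^{k-1}$. On this event two things happen. First, the moment $a$ is examined every maximal element of $P$ has already appeared (the other $k-1$ during the rejection phase), so the induced poset has its maximal set contained in $\max(P)$ and in particular has at most $k$ maximal elements: the ``at most $k$'' gate is open for $a$. Second, any element $z$ that is incomparable to $a$, or that lies below some maximal element other than $a$, sits below an element of $\max(P)$ that is already present in $S$; such a $z$ is therefore never maximal in the induced poset and can never be accepted. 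Consequently the only elements that can be accepted before $a$ are the \emph{$a$-private} ones, namely those lying below $a$ and below no other maximal element.

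It then remains to analyse a single maximal element together with the elements beneath it, which is essentially the $k = 1$ situation. The $a$-private elements on the chain are harmless as soon as a higher chain element has appeared, so, exactly as in the proof of Theorem~\ref{thm:lower_bound}, the relevant records come from the free top of the chain: a free top of length $s$ contributes a factor $(1-p)^s p$ (with the geometric completion $1 + (1-p) + \cdots = \frac1p$ used when the chain is exhausted, as there), while $a$ is the first of these $s$ elements to be examined with probability $\frac1s$. Summing over $s$ gives the factor $\sum_{s \ge 1}\frac{(1-p)^s}{s} = \log\frac1p$, so each maximal element contributes at least $p^{k-1}\cdot p\,\log\frac1p = p^{k}\log\frac1p$, and summing over the $k$ maximal elements yields the claimed bound $kp^{k}\log\frac1p$.

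The main obstacle, and the reason the bound here is weaker than that of Theorem~\ref{thm:no_antichain}, is the off-chain $a$-private elements. When the width exceeds $k$ the gate $|\max| \le k$ is no longer automatic, and such an element can momentarily be the unique newly-maximal element of the induced poset and so be accepted before $a$, or can close the gate at the instant $a$ is examined. The way I would deal with this is the observation already used in passing from \eqref{eqn:raw_formula} to \eqref{eq:Q} in Theorem~\ref{thm:no_antichain}: the extra comparabilities among the private elements only \emph{shrink} the number of simultaneously eligible records, that is, they only decrease the relevant denominator, so the single-chain estimate above remains a genuine lower bound. Turning this monotonicity into a rigorous domination is the technical heart of the proof; once it is in place, the computation collapses to $kp^{k}\sum_{s\ge1}\frac{(1-p)^s}{s}$ exactly as above. (Optimising $kp^{k}\log\frac1p$ over $p$ at $p = e^{-1/k}$ then gives the value $\frac1e$ needed for Theorem~\ref{thm:1e}.)
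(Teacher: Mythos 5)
Your overall skeleton is the paper's own: fixing a maximal element $a$ and paying a factor $p^{k-1}$ for the event that the other $k-1$ maximal elements lie in $S(p)$ is, after summing over $a$, exactly the paper's conditioning on the event $|\max(P)\cap S(p)|=k-1$ (which has probability $kp^{k-1}(1-p)$ and conditional success probability greater than $\frac{p}{1-p}\log\frac{1}{p}$ --- the same arithmetic, giving $kp^k\log\frac{1}{p}$), and your reduction to the ``$a$-private'' elements is verbatim the paper's Lemma~\ref{lem:conditional_win}, which discards every element dominated by a sampled maximal element and runs the $k=1$ problem on the poset $P'$ that remains. That part of your argument is correct, including the key observations that only $a$-private elements can ever be accepted on your favourable event and that the independence from Lemma~\ref{lem:independence} makes the sampling within $P'$ unconditioned; your residual worry that a private element might ``close the gate at the instant $a$ is examined'' is in fact vacuous, since every $a$-private element lies below $a$ and so is dominated the moment $a$ appears.

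The step you yourself flag as ``the technical heart'' --- that the general poset $P'$ with a unique maximal element is no harder than a chain --- is a genuine gap, and the mechanism you propose to fill it would not work. The denominator-shrinking argument in passing from~\eqref{eqn:raw_formula} to~\eqref{eq:Q} in Theorem~\ref{thm:no_antichain} leans on the width-$k$ hypothesis: by Dilworth's theorem the $k$ chain tops are precisely the $k$ maximal elements, so every ``free-top'' element that could be accepted is a genuine maximal element of $P$, and extra comparisons can only shrink the denominator. Your $P'$ has one maximal element but arbitrary width $w$, so a Dilworth decomposition has $w$ chains of which $w-1$ have \emph{non-maximal} tops; such a top can be the unique maximal element of the induced poset when it is observed and hence be wrongly accepted, so it is no longer true that ``the relevant records come from the free top of the chain,'' and the $A_{j_1,\ldots,j_k}$ bookkeeping collapses. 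The paper fills exactly this hole by a different device (Lemma~\ref{lem:one_max_element}): couple the run on $P'$ with the run on an arbitrary linear extension of it, using the same arrival order $\pi$ and the same rejected set $S(p)$. If the run on $P'$ fails, then either $x_{\max}\in S(p)$, in which case the linear run fails too, or some element arriving before $x_{\max}$ was the unique maximum of the induced poset --- and it remains the unique maximum in the linear extension, so the linear run also accepts an element earlier than $x_{\max}$ and fails. Hence success on $P'$ dominates success on a chain, which is precisely the monotonicity you wanted. One further bookkeeping point, which the paper makes explicitly in Lemma~\ref{lem:conditional_win} and your sketch glosses over: the chain bound $p\log\frac{1}{p}$ from Theorem~\ref{thm:lower_bound} is really a bound on the event $\{S(p)\neq P'\}\wedge\{\mathrm{win}\}$, and it is this slightly smaller event (on which a win forces $x_{\max}\notin S(p)$) that legitimises the conditional manipulation. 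With those two repairs your computation goes through and reproduces the paper's proof.
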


The proof will use two simple lemmas.  The first states that the linear order is the hardest of all posets with a unique maximal element.

\begin{lemma} \label{lem:one_max_element}
Let $(P,\prec)$ be a poset with $n$ elements.  Suppose that $(P,\prec)$ has exactly one maximal element.  Then the probability that $\tau_1(p)$ is successful on $(P,\prec)$ is at least the probability that it is successful on a linear ordering of $P$, and hence
\[
\Prob\big[\pi(\tau_1(p)) \in \max(P)\big] > p \log \frac{1}{p}.
\]
\end{lemma}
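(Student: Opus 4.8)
The plan is to prove the first assertion---that $\tau_1(p)$ does at least as well on an arbitrary poset $(P,\prec)$ with a unique maximal element as on a linear order---and then to deduce the displayed bound directly from Theorem~\ref{thm:lower_bound} (or equivalently Theorem~\ref{thm:no_antichain}) applied with $k=1$, since a linear order is precisely the poset consisting of $k=1$ disjoint chains. So the entire content is the comparison between $(P,\prec)$ and the line, and the natural strategy is a coupling argument. I would generate the randomness exactly as in Lemma~\ref{lem:independence}: place each element of $P$ into $S(p)$ independently with probability $p$, then order $S(p)$ uniformly at random followed by a uniform ordering of $P\setminus S(p)$. The key observation is that the \emph{same} set $S(p)$ and the \emph{same} ordering $\pi$ can be used simultaneously for $(P,\prec)$ and for a linear order on the same ground set; only the comparability relation differs between the two problems.

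The heart of the argument is then to understand exactly when $\tau_1(p)$ succeeds in each of the two posets. For $k=1$, the algorithm rejects $S(p)$ and then accepts the first subsequent element $\pi(t)$ with $t\in\max(P_t)$, i.e.\ the first element, after the rejection phase, that is maximal among everything seen so far. On a linear order, this means accepting the first post-rejection element that beats everything observed, and success occurs precisely when the global maximum $M$ lies outside $S(p)$ and, among the non-$S(p)$ elements, $M$ is reached before any other ``record'' (any element exceeding all of $S(p)$). I would phrase this cleanly as: success on the line holds iff $M\notin S(p)$ and $M$ is the first element of $P\setminus S(p)$ in $\pi$-order that dominates every element of $S(p)$ in the linear order. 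The analogous characterization on $(P,\prec)$ uses domination in $\prec$ instead; crucially the unique maximal element of $(P,\prec)$ is the same global maximum $M$ since it sits above everything.

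The main step---and the place I expect the real work to be---is to show that, for every fixed $S(p)$ and every fixed $\pi$, success on the line implies success on $(P,\prec)$, which gives the desired stochastic domination event-by-event under the coupling. The intuition is that $\prec$ is a \emph{weaker} relation than the linear order in the sense that fewer comparabilities means an element is less likely to be seen as a non-maximal ``false record'' that gets accepted prematurely; equivalently, the set of elements that look maximal-so-far in $P_t$ is a superset of those in the line, so the algorithm on $P$ has more opportunities to stop on $M$ and fewer spurious earlier stops. I would make this precise by arguing that any element $\pi(t)$ accepted on the line before reaching $M$ would, in the poset $(P,\prec)$, either still be accepted (so $P$ stops no later and can only stop on a maximal element earlier, which is fine) or fail to be maximal and be skipped---the delicate case being to verify that removing comparabilities never causes $P$ to accept a \emph{non-maximal} element that the line would have rejected. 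The clean way to handle this is to fix a linear extension of $\prec$ refining it to a total order and compare the two algorithms run on the same $\pi$: every element maximal-so-far in the total order is maximal-so-far in $P_t$, so the poset-run's stopping set at each time contains the line-run's, and since both can only succeed by stopping on $M$ (the unique top element common to both), any sample path on which the line succeeds is one on which $P$ succeeds too. Taking probabilities over the coupling then yields
\[
\Prob\big[\pi(\tau_1(p))\in\max(P)\big]\;\geq\;\Prob\big[\pi(\tau_1(p))\in\max(L)\big]\;>\;p\log\tfrac{1}{p},
\]
where $L$ is the linear order, with the final strict inequality supplied by Theorem~\ref{thm:lower_bound}.
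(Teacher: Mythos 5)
Your overall architecture is the same as the paper's: couple the two runs via the construction of Lemma~\ref{lem:independence} (same $\pi$, same $X(p)$, hence same $S(p)$), compare $(P,\prec)$ with a linear extension $\prec'$, transfer success from the line to the poset pathwise, and invoke Theorem~\ref{thm:lower_bound} with $k=1$. However, your key monotonicity step is stated with the containment in the wrong direction, and as written it does not prove what you need. You claim that ``every element maximal-so-far in the total order is maximal-so-far in $P_t$, so the poset-run's stopping set at each time contains the line-run's.'' The first half is true but beside the point: $\tau_1(p)$ does not stop at an element merely because it lies in $\max(P_t)$; it stops only when $|\max(P_t)| \leq 1$ \emph{and} the current element is that unique maximal element. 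Worse, if the poset-run's stopping set really did contain the line-run's, the lemma would be in danger rather than proved: more stopping opportunities would allow the poset run to stop \emph{before} $x_{\max}$ on a spurious element (one incomparable to much of what has been seen), failing on sample paths where the line succeeds. That is exactly the ``delicate case'' you flagged yourself, and the superset claim does not dispose of it. Your parenthetical that stopping earlier ``on a maximal element'' is ``fine'' is also wrong: since $P$ has a unique maximal element, accepting anything other than $x_{\max}$ is a failure.

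The correct mechanism---the one the paper uses---is the \emph{reverse} containment, and it comes precisely from the uniqueness requirement in the stopping rule. If the poset run stops at time $t$, then $\pi(t)$ is the unique maximal element of the poset seen so far, and in a finite poset a unique maximal element dominates every other element; this domination persists in any linear extension, so $\pi(t)$ is also the running maximum on the line. Hence every poset-stop is a line-stop. Now argue in the contrapositive, as the paper does: if the poset run fails, then either $x_{\max} \in S(p)$, in which case the line run fails for the same reason since $S'(p) = S(p)$, or the poset run accepts some element appearing before $x_{\max}$; by the containment just established, the line run's stopping condition holds at that element, so the line accepts it or something even earlier, and fails as well. (Success at $x_{\max}$ is automatic whenever it is reached, since it dominates everything observed.) With this one direction corrected, the remainder of your write-up---the coupling and the application of Theorem~\ref{thm:lower_bound} with $k=1$---goes through and recovers the paper's proof.
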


\begin{proof}
We begin by taking an arbitrary linear extension of $\prec$, that is, a partial order $\prec'$ such that any two elements are comparable and such that $x \prec y \Rightarrow x \prec' y$.  (It is clear that such a partial order exists.)  We denote the unique element in $\max_{\prec}(P) = \max_{\prec'}(P)$ by $x_{\max}$.

Given this new poset, $(P,\prec')$, we define random variables $\pi'$, $X'(p)$, $S'(p)$ and $\tau_1'(p)$ in the same way as $\pi$, $X(p)$, $S(p)$ and $\tau_1(p)$ were defined given $(P,\prec)$.  We couple the random variables $(\pi, X(p), S(p), \tau_1(p))$ and $(\pi', X'(p), S'(p), \tau_1'(p))$ in the obvious way; we set $\pi' = \pi$ and $X'(p) = X(p)$, and hence $S'(p) = S(p)$.  This means that the elements appear in the same order in both instances, and the same set $S(p)$ is rejected in both cases.  The induced posets observed in the process on $(P,\prec')$ are linear extensions of those observed in the process on $(P,\prec)$.  We show that if $\pi(\tau_1(p)) \neq x_{\max}$ then $\pi'(\tau_1'(p)) \neq x_{\max}$, that is, if $\tau_1(p)$ fails in the process on $(P,\prec)$ then $\tau_1'(p)$ fails on $(P,\prec')$.  From this, the result follows, since the probability of success is therefore at least as large on $(P,\prec)$ as on $(P,\prec')$, and Theorem~\ref{thm:lower_bound} applied to $(P,\prec')$ gives the lower bound.

If we reach $x_{\max}$ then it will be accepted, since it must be the unique maximal element in the poset induced by the elements observed so far.  Thus $\pi(\tau_1(p)) \neq x_{\max}$ if either
\begin{enumerate}[(i)]
\item $x_{\max} \in S(p)$ or \label{case:max_gone}
\item after rejecting $S(p)$, we accept an element that appears earlier than $x_{\max}$. \label{case:false_max}
\end{enumerate}
In case~\eqref{case:max_gone}, $\tau_1'(p)$ must fail on $(P,\prec')$ for the same reason, since $S'(p) = S(p)$.  In case~\eqref{case:false_max}, such an element must be the unique maximal element of the poset induced by what we have seen so far, and this is still the case in any linear extension.  Therefore, with $\tau_1'(p)$, if this element is observed then it must be accepted, and so we still accept an element that appears earlier than $x_{\max}$.  It follows that in either case $\pi'(\tau_1'(p)) \neq x_{\max}$.
\end{proof}

The next lemma gives a lower bound for the probability of success restricting our attention to the case when all but one of the maximal elements of our poset are in $S(p)$.  This turns out to be enough to prove Theorem~\ref{thm:known_max}.

\begin{lemma} \label{lem:conditional_win}
Let $(P,\prec)$ be a poset with $n$ elements.  Suppose that $(P,\prec)$ has $k$ maximal elements.  Then
\[
\Prob\big[\pi(\tau_k(p)) \in \max(P) \,\big|\, |\max(P) \cap S(p)| = k - 1 \big] > \frac{p}{1-p}\log \frac{1}{p}.
\]
\end{lemma}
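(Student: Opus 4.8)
The plan is to reduce the problem, after conditioning, to the single-maximal-element case handled by Lemma~\ref{lem:one_max_element}. First I would condition on the finer event $E_M$ that a specified maximal element $M$ is the unique one not in $S(p)$, so that all $k-1$ of the other maximal elements of $P$ lie in $S(p)$. Since the events $\{x \in S(p)\}$ are independent by Lemma~\ref{lem:independence}, the event in the statement is the disjoint union of the $E_M$ over $M \in \max(P)$, and it suffices to bound $\Prob[\pi(\tau_k(p)) \in \max(P) \mid E_M]$ uniformly in $M$, as the quantity we want is then a convex combination of these.

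The key structural step is to isolate the relevant subposet. Let $U$ be the union of the down-sets of the $k-1$ maximal elements lying in $S(p)$, and set $P'' = P \setminus U$. I would then verify three facts: (a) each rejected maximal element, being in $S(p)$ and hence seen during the reject phase, is present and maximal in every induced poset during the accept phase; (b) no element of $P''$ is dominated by any element of $U$ (otherwise it would itself lie in $U$), so a seen element of $P''$ is maximal in the full induced poset exactly when it is maximal among the seen elements of $P''$; and (c) consequently $M$ is the unique maximal element of $P''$. Combining (a)--(c), the number of maximal elements of the induced poset at any time after $S(p)$ has been seen equals $k-1$ plus the number of maximal elements among the seen elements of $P''$, and the elements of $U$ other than the rejected maximal elements are never maximal, hence never accepted.

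From this I can read off that, conditional on $E_M$, the algorithm $\tau_k(p)$ on $P$ accepts an element $w$ precisely when $w \in P''$ and $w$ is the unique maximal element among the seen elements of $P''$ -- exactly the acceptance rule of $\tau_1(p)$ run on $P''$. To make this identification exact I would note that $S(p) \cap P''$ is an initial segment of the appearance order of $P''$ (an element of $P''$ lies in $S(p)$ iff it appears in the first $X(p)$ positions, a property inherited by earlier $P''$-elements), so the reject phases match as well. By Lemma~\ref{lem:independence}, conditional on $E_M$ each non-maximal element of $P''$ still lies in $S(p)$ independently with probability $p$ while $M \notin S(p)$, and the order on $P''$ is uniform; hence the process on $P''$ is distributed exactly as $\tau_1(p)$ on $P''$ conditioned on $\{M \notin S(p)\}$. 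Since $P''$ has the single maximal element $M$, success of $\tau_k(p)$ under $E_M$ is precisely the event that $\tau_1(p)$ selects $M$ on $P''$.

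Finally I would invoke Lemma~\ref{lem:one_max_element}, which gives $\Prob[\pi(\tau_1(p)) \in \max(P'')] > p\log\frac1p$. Because selecting the unique maximal element forces $M \notin S(p)$, this bound is in fact a bound on the probability of success intersected with $\{M \notin S(p)\}$; this is exactly where the remark that the bound of Theorem~\ref{thm:lower_bound} is for the event $\pi(\tau_1(p)) \notin S(p)$ gets used. Dividing by $\Prob[M \notin S(p)] = 1-p$ yields $\Prob[\pi(\tau_k(p)) \in \max(P) \mid E_M] > \frac{p}{1-p}\log\frac1p$, and averaging over $M$ completes the argument. I expect the main obstacle to be checking cleanly that conditioning on $E_M$ leaves the distribution of $S(p)$ on $P''$ undisturbed, so that Lemma~\ref{lem:one_max_element} applies verbatim to the conditioned process, together with the bookkeeping that makes the reduction to $\tau_1$ on $P''$ an exact equality of events rather than merely an inequality.
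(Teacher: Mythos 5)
Your proposal is correct and follows essentially the same route as the paper's proof: reduce to the $k=1$ case by discarding the down-sets of the $k-1$ maximal elements in $S(p)$, observe that conditioned on this event $\tau_k(p)$ behaves exactly as $\tau_1(p)$ on the remaining poset conditioned on its unique maximum avoiding $S(p)$, and then combine the refined bound from Theorem~\ref{thm:lower_bound} (valid for success intersected with $\pi(\tau_1(p)) \notin S(p)$) with $\Prob[M \notin S(p)] = 1-p$. Your finer conditioning on the events $E_M$ and the explicit verification via the sets $U$ and $P''$ simply spell out carefully what the paper's proof states more informally, so there is no substantive difference in method.
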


\begin{proof}
We first observe that we may assume that $k = 1$, for the following reason.  The condition that $|\max(P) \cap S(p)|=k-1$ means that the $k-1$ maximal elements in $\max(P) \cap S(p)$ will be maximal for the remainder of the process, so when using $\tau_k(p)$ we may ignore these and all elements dominated by at least one of these, and wait for a unique maximal element from the remaining elements.  Those elements form a poset $(P',\prec')$ with a unique maximal element $x_{\max}$, which is not in $S(p)$.  Since all elements are in $S(p)$ with probability $p$ independently of the others, the situation is the same as if we were working with $(P',\prec')$ and conditioning on $x_{\max} \not\in S(p)$.  Looking for one of at most $k$ maximal elements in $P$ using $\tau_k(p)$ is the same as looking for a unique maximal element in $P'$ using $\tau_1(p)$.

We assume from now on that $k=1$; we shall use Lemma~\ref{lem:one_max_element} to prove the result in this case.  Lemma~\ref{lem:one_max_element} used the bound from Theorem~\ref{thm:lower_bound}, and we recall from the proof of that theorem that the lower bound for $\Prob\big[\pi(\tau_1(p)) = x_{\max}\big]$ is in fact a lower bound for $\Prob\big[(S(p) \neq P) \wedge (\pi(\tau_1(p)) = x_{\max})\big]$. 

Let us write $M$ for the event that $x_{\max} \in S(p)$ and $W$ for the event $(S(p) \neq P) \wedge (\pi(\tau_1(p)) = x_{\max})$.  We note that if $S(p) \neq P$ and $x_{\max} \in S(p)$ then $\pi(\tau_1(p)) \neq x_{\max}$ and hence $W = W \wedge M^c$.  Therefore,
\[
\Prob(W) = \Prob(W \wedge M^c) = \Prob(M^c) \Prob(W \,|\, M^c) = (1-p)\Prob(W \,|\, M^c).
\]
Since, by the bound from Lemma~\ref{lem:one_max_element},
\[
\Prob(W) > p\log \frac{1}{p},
\]
and the quantity that we are interested in is $\Prob(W \,|\, M^c)$, the result follows.
\end{proof}

We are now in a position to prove our theorem.

\begin{proof}[Proof of Theorem~\ref{thm:known_max}]
We have
\[
\Prob\big[|\max(P) \cap S(p)|=k-1\big] = kp^{k-1}(1-p).
\]
Thus, for general $k$,
\begin{align*}
\Prob\big[\pi(\tau_k(p)) \in \max(P)\big] & \,>\, \Prob\Big[\pi(\tau_k(p)) \in \max(P) \,\big|\, |\max(P) \cap S(p)| = k - 1 \Big] \\
&\hspace{4cm} \mbox{} \times \Prob\Big[|\max(P) \cap S(p)|=k-1\Big] \\
&\,>\, \frac{p}{1-p}\log \frac{1}{p} \cdot kp^{k-1}(1-p) \; = \; kp^k\log \frac{1}{p},
\end{align*}
as required.
\end{proof}

This gives us the following corollary. The probability $e^{-\frac{1}{k}}$ is chosen to maximize the function in Theorem~\ref{thm:known_max} and gives the lower bound in Theorem~\ref{thm:1e}.  As mentioned earlier, it is well-known that $\frac{1}{e}$ is the best possible lower bound for the probability of success in the classical secretary problem, and so this completes Theorem~\ref{thm:1e}.

\begin{corollary} \label{cor:known_max}
Let $(P,\prec)$ be a poset with $n$ elements of which $k$ are maximal.  Then
\[
\Prob\Big[\pi\left(\tau_k\left(e^{-\frac{1}{k}}\right)\right) \in \max(P)\Big] > \frac{1}{e}.
\]
\end{corollary}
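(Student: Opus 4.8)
The plan is to apply Theorem~\ref{thm:known_max} directly with the optimal choice of $p$, so that the only remaining work is an elementary one-variable optimization. By Theorem~\ref{thm:known_max}, for any $p$ with $0 < p < 1$ we have
\[
\Prob\big[\pi(\tau_k(p)) \in \max(P)\big] > kp^k\log \frac{1}{p},
\]
so it suffices to exhibit a value of $p$ for which the right-hand side is at least $\frac{1}{e}$; naturally I would choose the $p$ that maximizes it. Writing $f(p) = kp^k\log\frac{1}{p} = -kp^k\log p$, I would differentiate to obtain $f'(p) = -kp^{k-1}(k\log p + 1)$, which vanishes precisely when $\log p = -\frac{1}{k}$, that is, at $p = e^{-1/k}$. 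Since $f$ is positive on $(0,1)$ and tends to $0$ at both endpoints, this single interior critical point is the global maximum on $(0,1)$.

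Substituting $p = e^{-1/k}$ gives $p^k = e^{-1}$ and $\log\frac{1}{p} = \frac{1}{k}$, so that
\[
kp^k\log\frac{1}{p} = k \cdot e^{-1} \cdot \frac{1}{k} = \frac{1}{e}.
\]
Because the inequality supplied by Theorem~\ref{thm:known_max} is strict, this yields $\Prob\big[\pi(\tau_k(e^{-1/k})) \in \max(P)\big] > \frac{1}{e}$, which is exactly the claimed bound.

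There is no genuine obstacle in this step: all of the substance lies in Theorem~\ref{thm:known_max}, and the corollary merely records the value of $p$ that optimizes its bound. The only points worth verifying are that $e^{-1/k}$ does lie in the admissible range $(0,1)$ (immediate, since $\frac{1}{k} > 0$) and that the optimized value of the bound is exactly $\frac{1}{e}$ rather than something strictly larger; the latter is automatic from the computation above, and it is the strictness of the theorem's inequality that upgrades a bare value of $\frac{1}{e}$ to the strict inequality $>\frac{1}{e}$ asserted in the statement.
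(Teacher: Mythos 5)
Your proof is correct and is essentially the paper's own argument: the paper simply remarks that $p = e^{-\frac{1}{k}}$ is chosen to maximize the bound $kp^k\log\frac{1}{p}$ from Theorem~\ref{thm:known_max}, and substitution gives exactly $\frac{1}{e}$, with the strict inequality inherited from the theorem. Your additional verification via differentiation that $e^{-1/k}$ is the unique interior maximizer is a harmless elaboration of what the paper leaves implicit.
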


\section{Upper bound} \label{sec:upper}

In this section, we show that the bound in Corollary~\ref{cor:lower_bound} is best possible.  The proof of Theorem~\ref{thm:lower_bound} shows that the probability of success of the stopping time $\tau_k(p_k)$ on $k$ disjoint chains decreases towards the given lower bound as the chains increase in length.  This might suggest that the probability of success of an optimal strategy is reduced as the chains increase in length and thus, to prove that the bounds are best possible, we would consider chains with length tending to infinity.  The main theorem in this section, Theorem~\ref{thm:upper_bound}, does just that; for sufficiently long chains, the probability of success of an optimal stopping time can be made arbitrarily close to that in Corollary~\ref{cor:lower_bound}, and so $\tau_k(p_k)$ is asymptotically optimal.  Since the poset consisting of $k$ disjoint chains satisfies the conditions of Corollary~\ref{cor:lower_bound}, the bounds given are the best possible such bounds.

We define $D_k(x)$ to be the poset consisting of $k$ disjoint chains, each of size $x$.  It might be useful at this point to recall some definitions from Section~\ref{sec:formal}.  The probability space associated with the poset $D_k(x)$ is denoted by $(\Omega_{D_k(x)},\Alg_{D_k(x)},\Prob_{D_k(x)})$, but we suppress the subscripts when they are clear from the context.  The poset induced by the first $t$ elements that we observe is isomorphic to the random variable $P_t$, which is a poset on vertex set $[t]$, and $\Alg_t$ is the $\sigma$-algebra generated by $P_1,\ldots,P_t$, which represents what we know at time $t$.  A stopping time is a random variable $\tau$ taking values in $[n]$ and satisfying the property
\[
\{\tau = t\} \in \Alg_t.
\]
We shall use the notation $\Class_{(\Alg_t)}$ to denote the class of all such stopping times, and extend this notation to any sequence of $\sigma$-algebras in the analogous way.

We are trying to find an upper bound for $\Exp(Z_\tau)$ that holds for all stopping times $\tau$, where
\[
Z_t \,=\, \Prob\big[ \pi(t) \in \max(D_k(x)) \,|\, \Alg_t \big].
\]

Theorem \ref{thm:upper_bound} states that, as $x \to \infty$, the limit of the probability of success of the optimal stopping time on $D_k(x)$ is no greater than $p_k$.  Since Corollary \ref{cor:lower_bound} showed the existence of a stopping time with probability of success at least $p_k$, Theorem \ref{thm:upper_bound} shows that this is the best possible such bound and so gives Theorem \ref{thm:pk}.

\begin{theorem} \label{thm:upper_bound}
Let $p_k$ be as defined in~\eqref{eqn:pk}.  Then
\[
\lim_{x\to\infty} \sup_{\tau \in \Class_{(\Alg_t)}} \Exp_{D_k(x)}(Z_\tau) \leq p_k.
\]
\end{theorem}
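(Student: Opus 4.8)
The plan is to treat this as a finite-horizon optimal stopping problem and bound its value (the Snell envelope) by exhibiting an explicit dominating supermartingale whose initial value tends to $p_k$. Write $n=kx$. First I would reduce to \emph{candidates}: if $\pi(t)$ is not maximal in the induced poset $P_t$ (equivalently, not the top of the observed part of its chain) then $Z_t=0$, so we may restrict to stopping times that stop only at candidates, which only increases $\Exp(Z_\tau)$. For a candidate $\pi(t)$ lying in a chain of which $a$ elements (including $\pi(t)$) have so far been observed, a symmetry computation gives $Z_t=a/x$: indeed $\pi(t)$ is the true top of its chain iff the true top lies in the uniformly random $a$-subset revealed so far, which has probability $a/x$. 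Writing $a_1(t),\dots,a_k(t)$ for the numbers observed in each chain (so $\sum_i a_i(t)=t$), stopping at a candidate in chain $i$ at time $t$ yields expected payoff $a_i(t)/x$.

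Next I would identify the limiting continuation value. Rescaling by $u=t/n$, the standard records calculation shows that candidates arrive at rate $\approx k/t$ per step, while a candidate seen at time $u$ has value $a_i(t)/x\approx u$ since each $a_i(t)$ concentrates around $ux$. Consequently the threshold rule ``reject until time $c$, then accept the first candidate'' has limiting payoff $g(c)$, where $g(u)=\frac{k}{k-1}u(1-u^{k-1})$ for $k>1$ and $g(u)=u\log\frac1u$ for $k=1$ — exactly the quantities from Theorem~\ref{thm:lower_bound} — which is maximised at $c=p_k$ with value $g(p_k)=p_k$. I then define $W\colon[0,1]\to\Real$ by $W(u)=p_k$ on $[0,p_k]$ and $W(u)=g(u)$ on $[p_k,1]$. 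One checks directly that $W$ is continuous (since $g(p_k)=p_k$), non-increasing, and satisfies the first-order relation $kW(u)-uW'(u)=ku$ on $(p_k,1)$, with $W\equiv p_k$ before the threshold.

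The heart of the argument is to verify that $\phi_t:=\max\big(Z_t,\,W(t/n)\big)$ is, up to small errors, a supermartingale for $(\Alg_t)$ dominating $Z_t$. Domination is immediate. For the inequality $\Exp[\phi_{t+1}\mid\Alg_t]\le\phi_t$ at a non-candidate state, one computes that the next element is a candidate with probability $\approx k/t$ and, if so, has value $\approx t/n$; substituting and using $kW-uW'=ku$ shows the inequality holds to leading order, the curvature of $W$ exactly absorbing the upward jumps at candidate arrivals, while before the threshold ($u<p_k$) it holds with room to spare because $W'=0$ and $u<p_k=W(u)$. Granting this, optional stopping for supermartingales gives $\Exp(Z_\tau)\le\Exp(\phi_\tau)\le\phi_0=W(0)=p_k$ for every stopping time $\tau\in\Class_{(\Alg_t)}$, uniformly, and letting $x\to\infty$ yields the theorem.

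The main obstacle is making this rigorous: each per-step use of the supermartingale inequality carries an error, and these must sum to $o(1)$ over all $n=kx$ steps, which forces \emph{uniform} control of the two approximations (the candidate rate $k/t$ and the value $a_i(t)/x\approx t/n$). This rests on concentration of each $a_i(t)$ around $ux$ — the counts are hypergeometric, hence concentrated with fluctuations $O(\sqrt{x})$ — and in particular on showing that the rare event that some chain is observed far ahead of schedule (which would let $\phi$ jump close to $1$ while $W(t/n)$ is small) contributes negligibly, using that its probability is exponentially small in $x$ while the payoff is at most $1$. Assembling these estimates so that the accumulated defect is $o(1)$ uniformly over all stopping times is the technical crux; the identification of the limiting value as exactly $p_k$ then follows from the explicit $W$ above.
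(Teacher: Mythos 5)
Your argument is correct in outline, and it takes a genuinely different route from the paper's. You proceed by verification: you guess the limiting value function $W$ explicitly (constant $p_k$ up to the threshold, then the threshold-rule payoff $g$), check the first-order identity $kW(u)-uW'(u)=ku$ on $(p_k,1)$ --- which is indeed exactly what makes the drift of $\phi_t=\max\{Z_t,W(t/n)\}$ vanish to leading order, and note that $W$ is in fact $C^1$ at $u=p_k$ since both one-sided derivatives vanish there, which helps near the threshold --- and conclude by optional stopping, pushing all the difficulty into uniform per-step error control. The paper never writes down the value function in closed form: after establishing your concentration input (hypergeometric estimates for the event $U$ that all chain counts stay approximately uniform, Lemmas~\ref{lem:UCs} and~\ref{lem:uniformity}, and the reduction in Lemma~\ref{lem:assume_uniform}), it dominates $Z_t$ on $U$ by a sequence of \emph{independent} two-valued random variables $Y_t$ whose parameters $y_t,\tilde{p}_t$ are constant on blocks of $k\ell$ steps, compares the two games by backward induction on $n-t$ (Lemma~\ref{lem:upper}), shows the optimal rule for the $Y$-game is a block-threshold rule (Lemma~\ref{lem:simple_stopping_times}), and evaluates its payoff exactly in the double limit $\ell\to\infty$ then $m\to\infty$. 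What each approach buys: the paper's block discretization makes the domination \emph{exact} on $U$, so no accumulated defects ever need to be summed --- precisely the step you flag as your technical crux --- at the price of introducing an auxiliary process and a two-parameter limit; your route is more direct, exhibits the limit of the Snell envelope explicitly, and bounds $\Exp(Z_\tau)$ uniformly over $\tau\in\Class_{(\Alg_t)}$ in one pass, but it requires checking the supermartingale inequality on every atom of $\Alg_t$ with total defect $o(1)$ over all $n=kx$ steps. That crux is genuinely fillable along the lines you indicate: since $Z_t\le t/x$ always, one has $\phi_t\equiv p_k$ deterministically for all $t\le p_k n/k$, so concentration is needed only for $t=\Theta(n)$, where hypergeometric large deviations are exponentially small in $x$; on the good event with count tolerance $\epsilon x$ the per-step defect is $O(\epsilon n/t^2)$ (for fixed $k$), which sums over $t\ge p_kn/k$ to $O(\epsilon)$, and letting $\epsilon\to 0$ after $x\to\infty$ recovers the theorem --- morally the same two-scale limit as the paper's $(\ell,m)$.
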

Note that the supremum is over stopping times in $\Class_{(\Alg_t)}$, which means that we are allowed to use the extra information from the structure of the posets, not just the pay-offs that we are offered.

The following observation is important, so we record it as a lemma.

\begin{lemma} \label{lem:yx}
When $(P,\prec) = D_k(x)$, we have
\[
Z_t = \left\{
\begin{array} {c@{\quad \text{if } }l} 
y/x & \pi(t) \in \max(P_t), \pi(t) \in C \text{ and } |C \cap \{\pi(1),\ldots,\pi(t)\}| = y, \\[+1ex]
0 & \pi(t) \not\in \max(P_t),
\end{array}\right.
\]
where $C$ is one of the $k$ chains in $D_k(x)$.
\end{lemma}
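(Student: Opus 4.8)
The plan is to prove the two cases of the formula separately, disposing of the trivial one first and then reducing the interesting one to a single symmetry computation. Suppose first that $\pi(t) \not\in \max(P_t)$. Since $\prec_t$ is exactly the order that $\prec$ induces on the first $t$ observed elements, there is some $i \in [t]$ with $\pi(t) \prec \pi(i)$, so $\pi(t)$ is dominated in $P$ itself and can never be maximal. Hence on this event $Z_t = \Prob\big[\pi(t) \in \max(P) \,|\, \Alg_t\big] = 0$, which is the second line.

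For the main case, assume $\pi(t) \in \max(P_t)$, let $C$ be the chain of $D_k(x)$ containing $\pi(t)$, and let $y = |C \cap \{\pi(1),\ldots,\pi(t)\}|$. Because $C$ is a chain, any two of its elements are comparable, so $\pi(t)$ being $\prec_t$-maximal among the first $t$ observed elements forces it to be the $\prec$-largest of the $y$ observed elements of $C$. Writing $v_1 \prec \cdots \prec v_x$ for the elements of $C$ in increasing order, the unique element of $C$ lying in $\max(P)$ is $v_x$. I would then observe that $\pi(t) \in \max(P)$ is precisely the event $\pi(t) = v_x$, and that since $\pi(t)$ is already the observed maximum of $C$, this happens if and only if $v_x$ is one of the $y$ observed elements of $C$. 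So the whole problem reduces to computing the conditional probability, given $\Alg_t$, that the observed portion of $C$ contains its top element $v_x$.

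The remaining and genuinely delicate step is to show that, conditionally on $\Alg_t$, the set of observed elements of $C$ is a uniformly random $y$-subset of $\{v_1,\ldots,v_x\}$; once this is established the answer is immediate, since $\Prob[v_x \in \text{subset}] = \binom{x-1}{y-1}/\binom{x}{y} = y/x$. This is where I expect the main obstacle to lie, because an atom of $\Alg_t$ fixes a great deal of extra information---the exact appearance times of the $C$-elements, their internal value-order, and the entire structure of the other $k-1$ chains---and one must verify that none of this biases \emph{which} physical elements of $C$ have been seen. I would argue this directly from the symmetry of the uniform random permutation $\pi$: after conditioning on which appearance-slots are occupied by each chain, the assignment of the $x$ values of $C$ (in their fixed chain order) to $C$'s slots is a uniformly random bijection, independent across chains. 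Restricting to the $y$ slots lying in $\{1,\ldots,t\}$ makes the observed subset of $C$ uniform, and its internal appearance-to-value pattern is independent of the subset itself; hence conditioning further on that pattern---which is all the information the $\Alg_t$-atom carries about $C$---leaves the subset distribution uniform. Since $v_x$ is the maximum of $C$, it belongs to this subset exactly when the observed maximum $\pi(t)$ equals $v_x$, and therefore $Z_t = y/x$.
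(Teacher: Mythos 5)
Your proof is correct and takes essentially the same route as the paper's: the paper's two-sentence argument is precisely your symmetry computation, namely that the top element of $C$ is equally likely to occupy any of the $x$ positions in the internal observation order of $C$, independently of the pattern information carried by $\Alg_t$, so that it lies among the first $y$ observed with probability $y/x$. Your write-up simply makes rigorous the conditional-independence details (uniform subset independent of the within-chain pattern, independence across chains) and the trivial $Z_t=0$ case, all of which the paper leaves implicit.
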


\begin{proof}
The maximal element of a chain $C$ is equally likely to be at any position in the order in which its $x$ elements are observed.  Therefore, when $y$ elements have been observed from this chain, the probability that one of them is the maximal element is $\frac{y}{x}$, independent of the most recently observed element being maximal.
\end{proof}

The proof of Theorem \ref{thm:upper_bound} will proceed roughly as follows.  At time $t$ we expect to have seen approximately $\frac{t}{k}$ elements from each chain.  Therefore, since all orders are equally likely, the $t$\th\ element that we observe is maximal in what we have seen so far with probability approximately $\frac{1}{t/k} = \frac{k}{t}$.  By Lemma~\ref{lem:yx}, if this happens then it is a maximal element of $D_k(x)$ with probability approximately $\frac{t}{kx}$.  We conclude that $Z_t$ is approximately distributed as
\begin{equation}\label{eqn:Z_t_ditribution}
Z_t = \left\{
\begin{array} {c@{\quad \text{with probability } } l }
\frac{t}{kx} & \frac{k}{t}, \\[+0.5ex]
0 & 1 - \frac{k}{t}.
\end{array} \right.
\end{equation}

If $Z_t$ were distributed exactly like this with the $Z_t$ all independent of each other, then the proof would not be difficult to complete.  Since the potential non-zero value of $Z_t$ increases with $t$, it is straightforward to show (as in the classical secretary problem; more details will be given later in this section) that the optimal strategy is to ignore the first $I$ elements and accept the next non-zero $Z_t$.  Let us denote the associated stopping time by $\tau_I$ and make some rough calculations.  This is only an outline of the more precise argument that will be given later; in particular, $\approx$ is only intended to have an intuitive meaning and does not stand for any well-defined relation.  We find that
\begin{align*}
\Exp(Z_{\tau_I}) &= \sum_{t=I+1}^{kx} \Prob(\tau_I = t) \Exp(Z_t \,|\, \tau_I = t) \\
&\approx \sum_{t=I+1}^{kx} \Prob\Big[\big(Z_i = 0 \;\; \forall \, i \in \{I+1,\ldots,t-1\}\big) \wedge \big(Z_t > 0\big)\Big] \cdot \frac{t}{kx} \\
&\approx \sum_{t=I+1}^{kx} \left(\prod_{i=I+1}^{t-1}\left(1-\frac{k}{i}\right)\right) \cdot \frac{k}{t}\cdot\frac{t}{kx} \\
&=\frac{1}{x} \sum_{t=I+1}^{kx} \prod_{i=I+1}^{t-1}\left(1-\frac{k}{i}\right).
\end{align*}
We shall apply this formula in the case where $k$ is much smaller than $I$, so we can approximate $1-\frac{k}{i}$ by $e^{-\frac{k}{i}}$, and twice approximate sums by integrals to obtain \label{page:Z_t_calculations}
\begin{align*}
\Exp(Z_{\tau_I}) &\approx \frac{1}{x} \sum_{t=I+1}^{kx} e^{-\sum_{i=I+1}^{t-1} \frac{k}{i}} \; \approx \; \frac{1}{x} \sum_{t=I+1}^{kx} e^{-k\log\left(\frac{t}{I}\right)} \\
&= \frac{I^k}{x} \sum_{t=I+1}^{kx} t^{-k} \; \approx \;  \left\{
\begin{array} {c@{\quad \text{ } } l }
\frac{I}{x} \cdot \log\left(\frac{x}{I}\right) &\text{if } k = 1, \\[+0.5ex]
\frac{k}{k-1} \cdot \frac{I}{kx} \cdot \left(1 - \left(\frac{I}{kx}\right)^{k-1}\right) &\text{if } k > 1.
\end{array} \right.
\end{align*}

This is the formula in Theorem~\ref{thm:no_antichain} with $p = \frac{I}{kx}$ and is thus maximized, as in Corollary~\ref{cor:lower_bound}, when $\frac{I}{kx} = p_k$, in which case $\Exp(Z_{\tau_I}) \approx p_k$.  Therefore the bounds in Corollary~\ref{cor:lower_bound} are best possible, and if these calculations had been exact then the proof of Theorem~\ref{thm:pk} would be complete.

Unfortunately, $Z_t$ is not distributed exactly as in \eqref{eqn:Z_t_ditribution}.  In order to conclude that the optimal stopping time is of the simple form above, we should like to use the principle of \emph{backward induction}, described later (see also~\cite{CRS}, Theorem 3.2).  This formalizes the intuitive principle that, in a finite game, the optimal strategy is simply to analyse at each step whether or not we expect our situation to improve by continuing, and to do so if and only if this is the case.

The reason why the sequence of random variables $(Z_t)_{t \in [n]}$ is difficult to analyse is that the values they can take vary depending on how the process unfolds.  However, it is very likely that at any time we shall have seen approximately the same number of elements from each chain.  The proof will therefore proceed by defining a sequence of random variables $(Y_t)_{t \in [n]}$, which act as asymptotically almost sure upper bounds for $Z_t$ and are easier to analyse.  To obtain these bounds, we shall split each chain into $m$ segments, each of length $\ell$, and split the process into $m$ sets of $k\ell$ observations.  These lengths $\ell$ are margins of error beyond which we do not expect the number of elements observed from a chain to deviate.  Initially, we shall fix $m$ and let $\ell \to \infty$ to find an upper bound for $\Exp(Y_\tau)$ and hence $\Exp(Z_\tau)$ in terms of $m$.  Letting $m \to \infty$ will then give us a best possible result.

This means that the precise statement we shall prove for Theorem~\ref{thm:upper_bound} is in fact
\[
\lim_{m\to\infty} \lim_{\ell \to \infty} \sup_{\tau \in \Class_{(\Alg_t)}} \Exp_{D_k(\ell m)}(Z_\tau) \leq p_k.
\]
However, this is purely a matter of convenience; it is clear that the proof can be extended to posets $D_k(x)$ where $x$ is not a multiple of $m$ by dividing each chain into $m$ almost equal rather than exactly equal segments.

\setlength{\unitlength}{1pt}
\begin{figure}[htp]
\begin{center}
\begin{picture}(96,132)
\multiput(10,12)(0,24){6}{\line(1,0){4}}
\put(12,12){\line(0,1){120}}
\multiput(34,12)(0,24){6}{\line(1,0){4}}
\put(36,12){\line(0,1){120}}
\multiput(52,72)(8,0){3}{\circle*{1}}
\multiput(82,12)(0,24){6}{\line(1,0){4}}
\put(84,12){\line(0,1){120}}
\put(96,72){\vector(0,1){60}}
\put(96,72){\vector(0,-1){60}}
\put(72,120){\vector(0,1){12}}
\put(72,120){\vector(0,-1){12}}
\put(99,70){\makebox[6pt][l]{$\ell m$}}
\put(63,118){\makebox[6pt][r]{$\ell$}}
\put(0,34){\makebox[6pt][r]{{\tiny{$\ell (s-1)$}}}}
\put(0,58){\makebox[6pt][r]{{\tiny{$\ell s$}}}}
\put(0,82){\makebox[6pt][r]{{\tiny{$\ell (s+1)$}}}}
\put(8,0){\makebox[8pt][l]{$C_1$}}
\put(32,0){\makebox[8pt][l]{$C_2$}}
\put(80,0){\makebox[8pt][l]{$C_k$}}
\linethickness{2pt}
\put(12,12){\line(0,1){54}}
\put(36,12){\line(0,1){76}}
\put(84,12){\line(0,1){24}}
\end{picture}
\end{center}
\caption{This figure shows the number of elements observed from each chain.  In this example, after a total of $k\ell s$ elements have been observed we see that $U_{C_1,s}$ and $U_{C_k,s}$ hold but $U_{C_2,s}$ does not.\label{fig:ucs}}
\end{figure}

We need to show that the process behaves in this approximately uniform manner with high probability as $\ell \to \infty$.  We shall first define what it means to be approximately uniform in one particular chain $C$ at time $k\ell s$, an event we call $U_{C,s}$ (see Figure~\ref{fig:ucs}), and then what it means to be approximately uniform everywhere at all times, an event we call $U$.

Given one of the chains, $C$, and for all $s \in \{0,\ldots,m\}$, let $U_{C, s}$ be the event that when we have observed $k\ell s$ elements in total we have observed between $\ell (s-1)$ and $\ell (s+1)$ elements from $C$, that is,
\[
U_{C,s} = \big\{ \ell (s-1) \leq |C \cap \{\pi(1),\ldots,\pi(k\ell s)\}| \leq \ell (s+1) \big\}.
\]
For all $t \in \{0,\ldots,k\ell m\}$, let $s(t)$ be the unique integer $s$ such that $k\ell (s-1) < t \leq k\ell s$, that is,
\begin{equation}\label{eq:sdef}
s(t) = \left\lceil\frac{t}{k\ell}\right\rceil. 
\end{equation}
Let $U$ be the event that for all $t$ when we have observed $t$ elements in total we have observed between $\ell (s(t)-2)$ and $\ell (s(t)+1)$ elements from each chain, that is,
\[
U = \bigcap_{i,t} \big\{\ell (s(t)-2) \leq |C_i \cap \{\pi(1),\ldots,\pi(t)\}| \leq \ell (s(t)+1) \big\}.
\]

We shall use Lemma~\ref{lem:uniformity}, which follows easily from Lemma~\ref{lem:UCs}.  It states that the process is approximately uniform with high probability.

\begin{lemma} \label{lem:UCs}
Let $m \geq 1$ be an integer, let $C$ be one of the chains in $D_k(\ell m)$ and let $s \in \{0,\ldots,m\}$.  Then
\[
\lim_{\ell \to \infty} \Prob_{D_k(\ell m)}(U_{C, s}) = 1.
\]
\end{lemma}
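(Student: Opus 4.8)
The plan is to recognise $U_{C,s}$ as a straightforward concentration statement for a hypergeometric random variable and to close it with Chebyshev's inequality. First I note that the event $U_{C,s}$ depends only on the order $\pi$ in which the elements appear (equivalently, on the $\sigma$-coordinate), so under $\Prob_{D_k(\ell m)}$ we are simply looking at a uniformly random permutation of the $n = k\ell m$ elements. Write $N = |C \cap \{\pi(1),\ldots,\pi(k\ell s)\}|$ for the number of the $\ell m$ elements of the chain $C$ that land among the first $k\ell s$ positions. Then $N$ is hypergeometric --- we are choosing $k\ell s$ of the $k\ell m$ elements and counting how many of the $\ell m$ marked ones (those in $C$) are selected --- and by symmetry its mean is $\Exp N = k\ell s \cdot \frac{\ell m}{k\ell m} = \ell s$. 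Crucially, $\ell s$ sits exactly at the centre of the tolerance window, so the event $U_{C,s} = \{\ell(s-1) \le N \le \ell(s+1)\}$ is precisely $\{|N - \ell s| \le \ell\}$.

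It therefore suffices to show that $\Prob(|N - \ell s| > \ell) \to 0$ as $\ell \to \infty$, with $k$, $m$ and $s$ fixed. I would bound the variance of $N$ by writing $N = \sum_{x \in C} \Ind[x \in \{\pi(1),\ldots,\pi(k\ell s)\}]$. Each indicator has mean $\frac{s}{m}$ and hence variance $\frac{s}{m}(1 - \frac{s}{m}) \le \frac14$, and because the positions are sampled without replacement the indicators are pairwise negatively correlated; consequently $\mathrm{Var}(N)$ is at most the sum of the individual variances, namely $\ell m \cdot \frac{s}{m}(1-\frac{s}{m}) \le \ell m/4 = O(\ell)$. (Alternatively one may simply quote the hypergeometric variance formula, which is of the same order.) Chebyshev's inequality then gives $\Prob(|N - \ell s| > \ell) \le \mathrm{Var}(N)/\ell^2 \le m/(4\ell) \to 0$, so that $\Prob(U_{C,s}) = 1 - \Prob(|N - \ell s| > \ell) \to 1$, as required.

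Finally I would dispose of the boundary cases $s = 0$ and $s = m$ separately: there $N$ is deterministic (equal to $0$ and to $\ell m$ respectively), and in both cases $U_{C,s}$ holds with probability $1$, consistent with the limit above. There is no serious obstacle here --- the whole argument is a routine second-moment estimate --- and the only points requiring a little care are the observation that the mean $\ell s$ lands precisely in the middle of the allowed window (which is what makes a symmetric deviation bound applicable) and the verification that $\mathrm{Var}(N) = O(\ell)$ rather than $O(\ell^2)$, for which the negative correlation of without-replacement indicators is exactly what is needed.
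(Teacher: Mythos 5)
Your proof is correct, but it takes a genuinely different route from the paper's. You both reduce to the same hypergeometric variable $N = |C \cap \{\pi(1),\ldots,\pi(k\ell s)\}|$ with mean $\ell s$, but the paper then works directly with the probability mass function $\Prob(N=x) = \binom{\ell m}{x}\binom{(k-1)\ell m}{k\ell s - x}\big/\binom{k\ell m}{k\ell s}$: it observes that this is unimodal with mode near $\ell s$, and that outside the window $[\ell(s-1),\ell(s+1)]$ the consecutive ratios $\Prob(N=x+1)/\Prob(N=x)$ are bounded away from $1$ by an absolute constant, so the tail mass is dominated by a geometric series and is $O(1/\ell)$. You instead run a second-moment argument: the tolerance window is exactly $\{|N - \ell s| \le \ell\}$, the indicators of membership in the first $k\ell s$ positions are negatively correlated under sampling without replacement, so $\mathrm{Var}(N) \le \ell m \cdot \frac{s}{m}\bigl(1 - \frac{s}{m}\bigr) \le \ell m/4$, and Chebyshev gives the tail bound $m/(4\ell)$. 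The two arguments deliver the same $O(1/\ell)$ rate, which is all the lemma needs (both are generous: the true tail at distance $\ell$ from the mean, which has standard deviation $\Theta(\sqrt{\ell})$, is in fact exponentially small, and the paper's ratio method would yield that with a little more work). What your approach buys is that it is fully self-contained and routine --- the paper's ``straightforward to check'' monotonicity and ratio estimates for the hypergeometric pmf are precisely the computations it omits, whereas your negative-correlation and variance bounds are one-line verifications; what the paper's approach buys is sharper information about the shape of the distribution, though it does not exploit it. Your separate treatment of $s=0$ and $s=m$ is harmless but unnecessary, since there $\mathrm{Var}(N)=0$ and the Chebyshev argument covers these cases too (the paper likewise notes these endpoints only need a one-sided bound).
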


\begin{proof}
We show that the probability that we have observed more than $\ell (s+1)$ or fewer than $\ell (s-1)$ elements tends to zero as $\ell \to \infty$.  (If $s=0$ or $s=m$ then we need consider only one of these tails.)

Assume $C$ and $s$ are given.  Let $N$ be the number of elements we have observed from chain $C$ when we have observed $k\ell s$ elements in total.  It is straightforward to check that
\[
\Prob(N = x) = \frac{{\ell m \choose x}{(k-1)\ell m \choose k\ell s - x}}{{k\ell m \choose k\ell s}}
\]
is increasing for $x < \ell s$ and decreasing for $x > \ell s$, and that
\[
\left| \frac{ \Prob(N=x+1) }{ \Prob(N = x) } - 1 \right| > c > 0
\]
if $x \notin [\ell (s-1),\ell (s+1)]$, for some absolute $c > 0$.  It follows that $\Prob(N \notin [\ell (s-1),\ell (s+1)]) = O\left(\frac{1}{\ell}\right) \to 0$ as $\ell \to \infty$.
\end{proof}

\begin{lemma} \label{lem:uniformity}
Let $m \geq 1$ be an integer.  Then
\[
\lim_{\ell \to \infty} \Prob_{D_k(\ell m)}(U) = 1.
\]
\end{lemma}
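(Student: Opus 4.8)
The plan is to deduce the global uniformity event $U$ from the finitely many checkpoint events $U_{C_i,s}$, exploiting the fact that the observed-count functions are monotone in time. The key observation is that although $U$ constrains the number of observed elements from each chain at \emph{every} time $t$, it suffices to control these counts at the $m+1$ \emph{checkpoint} times $t = k\ell s$ for $s \in \{0,\ldots,m\}$, since between checkpoints the count $|C_i \cap \{\pi(1),\ldots,\pi(t)\}|$ can only increase.

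First I would fix a chain $C_i$ and a time $t$, and write $s = s(t) = \lceil t/(k\ell)\rceil$, so that $k\ell(s-1) < t \le k\ell s$ by~\eqref{eq:sdef}. By monotonicity of $t \mapsto |C_i \cap \{\pi(1),\ldots,\pi(t)\}|$, the count at time $t$ is at most the count at the later checkpoint $k\ell s$ and at least the count at the earlier checkpoint $k\ell(s-1)$. On the event $U_{C_i,s}$ the former is at most $\ell(s+1) = \ell(s(t)+1)$, and on the event $U_{C_i,s-1}$ the latter is at least $\ell((s-1)-1) = \ell(s(t)-2)$. Hence on $U_{C_i,s-1} \cap U_{C_i,s}$ the count at time $t$ lies in $[\ell(s(t)-2), \ell(s(t)+1)]$, which is precisely the constraint imposed by $U$ for this chain and time. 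The extremal cases cause no trouble, since $U_{C_i,0}$ and $U_{C_i,m}$ hold with certainty: at time $0$ the count is $0$, and at time $k\ell m$ it is exactly $\ell m$.

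Intersecting over all chains $i \in \{1,\ldots,k\}$ and all $s \in \{0,\ldots,m\}$, this yields the set-theoretic containment
\[
\bigcap_{i=1}^{k}\bigcap_{s=0}^{m} U_{C_i,s} \;\subseteq\; U.
\]
I would then bound the complement by the union bound, obtaining $\Prob(U^c) \le \sum_{i=1}^{k}\sum_{s=0}^{m} \Prob(U_{C_i,s}^c)$. This is a sum of $k(m+1)$ terms, with $m$ fixed, each of which tends to $0$ as $\ell \to \infty$ by Lemma~\ref{lem:UCs}, so $\Prob(U^c) \to 0$ and the result follows.

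There is no genuinely hard step here; the argument is essentially a monotonicity-plus-union-bound reduction. The only point requiring care is the bookkeeping with the indices---checking that the asymmetric margins $s(t)-2$ and $s(t)+1$ appearing in the definition of $U$ are matched correctly by the earlier checkpoint $s-1$ (for the lower bound) and the current checkpoint $s$ (for the upper bound). Getting this alignment right is exactly what makes the containment above hold.
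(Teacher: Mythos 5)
Your proof is correct and takes essentially the same route as the paper: the containment $\bigcap_{i,s} U_{C_i,s} \subseteq U$, justified by monotonicity of the counts between checkpoints (the paper likewise deduces the bound at time $t$ from $U_{C_i,s(t)-1}$ and $U_{C_i,s(t)}$), followed by a union bound over the $k(m+1)$ events, each tending to probability $1$ by Lemma~\ref{lem:UCs}. If anything, your write-up is slightly more careful than the paper's, making explicit the index alignment and the trivial extremal cases $U_{C_i,0}$ and $U_{C_i,m}$.
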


\begin{proof}
This lemma follows simply from the previous lemma: choose $\ell$ sufficiently large that each of the $k(m+1)$ events $U_{C_i, s}$ occurs with probability at least $1 - \delta$.  Then, trivially, all $k(m+1)$ events hold with probability at least $1 - k(m+1)\delta$.  It is easy to see that
\[
\bigcap_{C_i,s} U_{C_i,s} \subseteq U,
\]
since the events $U_{C_i,s(t)-1}$ and $U_{C_i,s(t)}$ imply that
\[
\ell (s(t)-2) \leq |C_i \cap \{\pi(1),\ldots,\pi(t)\}| \leq \ell (s(t)+1),
\]
and so this holds for every $t$ and $i$, as required.
\end{proof}

The next lemma states that in order to prove Theorem~\ref{thm:upper_bound}, it suffices to show that its statement is true if we condition on $U$ holding.  This formalizes the intuition that, since the process is asymptotically almost surely uniform (that is, since $\lim_{\ell \to \infty} \Prob_{D_k(\ell m)}(U) \to 1$), we may assume that it is uniform.

Recall that $\Class_{(\Alg_t)}$ is the class of all stopping times relative to the $\sigma$-algebras $\Alg_t = \sigma(P_1,\ldots,P_t)$, that is, the decision to stop at time $t$ depends only on $P_1,\ldots,P_t$.

\begin{lemma} \label{lem:assume_uniform}
For all $m$,
\[
\lim_{\ell\to\infty} \sup_{\tau \in \Class_{(\Alg_t)}} \Exp_{D_k(\ell m)}(Z_\tau) \leq \lim_{\ell\to\infty} \sup_{\tau \in \Class_{(\Alg_t)}} \Exp_{D_k(\ell m)}(Z_\tau \,|\, U).
\]
\end{lemma}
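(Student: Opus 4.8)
The plan is to exploit two facts: that $Z_\tau$ is uniformly bounded, and that $U$ is asymptotically almost sure. Since each $Z_t$ is a conditional probability we have $0 \leq Z_t \leq 1$, and as $\tau$ takes values in $[n]$ this gives $0 \leq Z_\tau \leq 1$ for every stopping time $\tau \in \Class_{(\Alg_t)}$. By Lemma~\ref{lem:uniformity}, $\Prob_{D_k(\ell m)}(U) \to 1$ as $\ell \to \infty$, so $\Prob_{D_k(\ell m)}(U^c) \to 0$. The whole argument is the standard principle that restricting attention to a high-probability event perturbs the expectation of a bounded quantity only negligibly.

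First I would fix $\ell$ and $\tau$ and decompose
\[
\Exp(Z_\tau) = \Exp(Z_\tau \,|\, U)\,\Prob(U) + \Exp(Z_\tau \,|\, U^c)\,\Prob(U^c).
\]
Using $\Prob(U) \leq 1$ and $\Exp(Z_\tau \,|\, U) \geq 0$ for the first term, and $\Exp(Z_\tau \,|\, U^c) \leq 1$ for the second, this yields
\[
\Exp(Z_\tau) \leq \Exp(Z_\tau \,|\, U) + \Prob(U^c).
\]
The crucial point is that the error term $\Prob(U^c)$ does not depend on $\tau$: it is governed purely by the geometry of the process via Lemma~\ref{lem:uniformity}, not by the choice of stopping rule. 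Hence I may bound the right-hand side by $\sup_{\tau' \in \Class_{(\Alg_t)}} \Exp(Z_{\tau'} \,|\, U) + \Prob(U^c)$ and then take the supremum over $\tau$ on the left, obtaining
\[
\sup_{\tau \in \Class_{(\Alg_t)}} \Exp_{D_k(\ell m)}(Z_\tau) \leq \sup_{\tau \in \Class_{(\Alg_t)}} \Exp_{D_k(\ell m)}(Z_\tau \,|\, U) + \Prob_{D_k(\ell m)}(U^c).
\]

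Finally I would let $\ell \to \infty$. Since $\Prob_{D_k(\ell m)}(U^c) \to 0$, the additive error vanishes and the claimed inequality between the two limits follows. If one is concerned about the existence of these limits, running the same computation with $\limsup$ on both sides suffices, which is all that is needed to feed into the upper-bound argument of Theorem~\ref{thm:upper_bound}. I do not expect any serious obstacle; the only point requiring care — and the reason the lemma is worth isolating — is the interchange with the supremum over stopping times. Because every $Z_\tau$ lies in $[0,1]$, the discrepancy between conditioning on $U$ and not conditioning is bounded by $\Prob(U^c)$ \emph{uniformly} over all $\tau \in \Class_{(\Alg_t)}$, so the supremum passes through without the error term depending on the maximizing strategy.
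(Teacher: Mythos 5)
Your proposal is correct and follows essentially the same route as the paper: the same decomposition $\Exp(Z_\tau) = \Exp(Z_\tau \,|\, U)\Prob(U) + \Exp(Z_\tau \,|\, U^c)\Prob(U^c)$, the same use of $Z_\tau \leq 1$ together with Lemma~\ref{lem:uniformity}, and the same observation that the error term $\Prob(U^c)$ is uniform over all stopping times so the supremum passes through. Your explicit remark about uniformity in $\tau$ is exactly the point the paper handles by taking suprema with a fixed $\epsilon$, so there is nothing to add.
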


\begin{proof}
By Lemma~\ref{lem:uniformity}, for all $\epsilon > 0$ we may choose $\ell$ sufficiently large that $\Prob_{D_k(\ell m)}(U^c) \leq \epsilon$.  We also know that $Z_t \leq 1$ for all $t$.  Therefore, for all $\tau$,
\begin{align*}
\Exp_{D_k(\ell m)}(Z_\tau) &= \Exp_{D_k(\ell m)}(Z_\tau \,|\, U)\Prob_{D_k(\ell m)}(U) + \Exp_{D_k(\ell m)}(Z_\tau \,|\, U^c)\Prob_{D_k(\ell m)}(U^c) \\
&\leq \Exp_{D_k(\ell m)}(Z_\tau \,|\, U) + \epsilon.
\end{align*}
We now take suprema to obtain
\[
\sup_{\tau \in \Class_{(\Alg_t)}} \Exp_{D_k(\ell m)}(Z_\tau) \leq \sup_{\tau \in \Class_{(\Alg_t)}} \Exp_{D_k(\ell m)}(Z_\tau \,|\, U) + \epsilon.
\]
Since $\epsilon$ is arbitrary, the result follows.
\end{proof}

Next, we define the random variables $Y_t$ that act as upper bounds for the $Z_t$ and are easier to analyse.  These random variables are not strict upper bounds, in the sense that the random variables are not coupled in any way.  However, conditioned on $U$ occurring, $Z_t$ is less than the potential non-zero value of $Y_t$ and the probability that $Z_t$ is non-zero is less than the probability that $Y_t$ is non-zero, and we shall be able to show that the optimal strategy for the game on $Z_t$ has a lower expected pay-off than the optimal strategy for the game on $Y_t$.

We shall often need to refer to the potential non-zero value of $Y_t$ and the probability that it takes this value, so let
\begin{equation}\label{eq:yp}
y_t = \frac{s(t)+1}{m} \qquad \text{and} \qquad \tilde{p}_t = \left\{
\begin{array} {c@{\quad \text{if } }l} 
\frac{1}{\ell (s(t)-2)} & s(t) \geq 3, \\[+1ex]
1 & s(t) \leq 2,
\end{array}\right.
\end{equation}
where $s(t)$ is as defined in~\eqref{eq:sdef}.  We now define a sequence of independent random variables $(Y_t)_{t \in [n]}$ by
\[
Y_t =  \left\{
\begin{array} {c@{\quad \text{ } } l }
y_t &\text{with probability } \tilde{p}_t, \\[+0.3ex]
0 &\text{with probability } 1 - \tilde{p}_t.
\end{array} \right.
\]
We shall not define these explicitly on any probability space as there is no need to do so, although it is of course straightforward to do it on $\Omega$.

The next lemma states the intuitive principle that we expect to do at least as well in the game with the random variables $Y_t$ as in the game with the random variables $Z_t$ conditioned on $U$ occurring.  Analagously to $\Alg_t$ for $Z_t$, let $(\Algg_t)_{t \in [n]}$ be defined by $\Algg_t = \sigma(Y_1, \ldots, Y_t)$.

\begin{lemma} \label{lem:upper}
For all $\ell,m \in \mathbb{N}$ with $m \ge 3$,
\[
\sup_{\tau \in \Class_{(\Alg_t)}} \Exp_{D_k(\ell m)}(Z_\tau \,|\, U) < \sup_{\tau \in \Class_{(\Algg_t)}} \Exp_{D_k(\ell m)}(Y_\tau).
\]
\end{lemma}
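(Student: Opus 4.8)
The plan is to decouple the two sources of randomness in the process on $D_k(\ell m)$ and then run a backward induction, comparing the $Z$-game with the $Y$-game one step at a time. Write $c(t) \in \{1,\ldots,k\}$ for the index of the chain from which $\pi(t)$ comes, and call the sequence $(c(1),\ldots,c(n))$ the \emph{macro-order}; the relative order of the elements within each chain will be called the \emph{micro-order}. The crucial observation is that the event $U$, and each occupation count $|C_i \cap \{\pi(1),\ldots,\pi(t)\}|$, is a function of the macro-order alone, whereas whether $\pi(t)$ is a new maximum of its chain (i.e.\ whether $Z_t \neq 0$) is governed by the micro-order. First I would fix the macro-order and argue that, conditional on it, the reward sequence $(Z_t)$ becomes a sequence of \emph{independent} random variables. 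Indeed, if $\pi(t)$ is the $y$-th element seen from its chain, then by Lemma~\ref{lem:yx} we have $Z_t = y/(\ell m)$ precisely when $\pi(t)$ is a record of the uniformly random order on that chain, and $Z_t = 0$ otherwise; since the record indicators of a uniform random permutation are mutually independent (the $y$-th having probability $1/y$) and are independent of the relative order of the earlier elements, and since distinct chains are independent, conditioning on the macro-order makes the $Z_t$ independent, with $Z_t = y(t)/(\ell m)$ with probability $1/y(t)$ and $0$ otherwise, where $y(t)$ is the micro-position of $\pi(t)$.

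Next I would pass from an arbitrary stopping time to the backward-induction value. Fix a macro-order lying in $U$. A stopping time $\tau \in \Class_{(\Alg_t)}$, restricted to this fibre, is adapted to the micro-information; but because the future rewards are independent of the past given the macro-order, no information beyond the current time and current reward is of any use, so $\Exp_{D_k(\ell m)}(Z_\tau \mid \text{macro-order}) \le V$, where $V$ denotes the optimal value of the independent reward sequence, computed by backward induction as
\[
v_{n+1} = 0, \qquad v_t = \tfrac{1}{y(t)}\,\max\!\Big(\tfrac{y(t)}{\ell m},\, v_{t+1}\Big) + \Big(1 - \tfrac{1}{y(t)}\Big) v_{t+1},
\]
and $V = v_1$. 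The same principle applied to the independent sequence $(Y_t)$, whose law does not depend on any hidden order, gives $\sup_{\tau \in \Class_{(\Algg_t)}} \Exp_{D_k(\ell m)}(Y_\tau) = w_1$, where $w_{n+1} = 0$ and $w_t = \tilde{p}_t \max(y_t, w_{t+1}) + (1-\tilde{p}_t) w_{t+1}$.

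Now I would compare $v_1$ with $w_1$ using the bounds defining $U$. Rewriting $w_t = w_{t+1} + \tilde{p}_t \max(y_t - w_{t+1},\, 0)$ shows that $w_1$ is non-decreasing in each reward value $y_t$ and in each probability $\tilde{p}_t$. On $U$ the occupation bounds give $\ell(s(t)-2) \le y(t) \le \ell(s(t)+1)$, so the non-zero reward satisfies $y(t)/(\ell m) \le (s(t)+1)/m = y_t$ and its probability satisfies $1/y(t) \le \tilde{p}_t$ (using $\tilde{p}_t = 1$ when $s(t) \le 2$). A downward induction on $t$ using these two monotonicities then yields $v_t \le w_t$ for every $t$, and in particular $V = v_1 \le w_1$; the inequality is strict because at least one of the dominations above is strict and active (for instance $\tilde{p}_t = 1 > 1/y(t)$ whenever $s(t) \le 2$ and $y(t) \ge 2$). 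Averaging $\Exp_{D_k(\ell m)}(Z_\tau \mid \text{macro-order}) \le V < w_1$ over all macro-orders in $U$, which is legitimate since $U$ is macro-measurable, and taking the supremum over $\tau$ gives the claim.

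The step I expect to be the main obstacle is the reduction in the second paragraph: making rigorous that, conditional on the macro-order, a stopping time allowed to see the whole poset $P_t$ cannot beat the backward-induction value of the reward sequence. This rests on the record-independence fact that the future reward indicators are independent not merely of the past rewards but of the entire past micro-order, so that the extra structural information carried by $P_t$ is genuinely irrelevant to the continuation value. Once this is in place, the two monotonicity properties of the backward-induction value used in the final paragraph are routine.
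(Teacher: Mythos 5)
Your overall architecture is sound, and it is essentially the paper's: the paper also runs backward induction on the $Y$-game to get deterministic values $v(t)$, and then proves by downward induction on $t$ that $\Exp(\beta_t \mid \Alggg_{t-1})(\omega) < v(t)$ for $\omega \in U$, using exactly your two dominations $Z_t(\omega) \le y_t$ and $\Prob(Z_t > 0 \mid \cdot)(\omega) \le \tilde{p}_t$ from~\eqref{eq:yp} and Lemma~\ref{lem:yx}. Your macro/micro decoupling is in fact a point where you are \emph{more} careful than the paper: the paper simply asserts the bound on $\Prob(Z_t > 0 \mid \Alggg_{t-1})$, whereas conditioning on $U$ (an event depending on the whole future) could in principle bias the record probabilities; your observation that the record indicators are independent of the macro-order (of which $U$ and all occupation counts are functions) and of the past within-chain relative orders is precisely what justifies that step, and it also disposes of your own anticipated obstacle, since a stopping time in $\Class_{(\Alg_t)}$ restricted to a macro-fibre is adapted to the micro-filtration, of which the future rewards are independent.

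The genuine gap is your strictness argument, and as written it would fail. The instance you cite ($\tilde{p}_t = 1 > 1/y(t)$ when $s(t) \le 2$ and $y(t) \ge 2$) occurs only at times $t \le 2k\ell$, where $y_t \le 3/m$ is far below the continuation value $w_{t+1}$, so $\max(y_t - w_{t+1},0) = 0$ and the strict domination in the probability is \emph{inactive}: perturbing $\tilde{p}_t$ there changes nothing and does not separate $v_t$ from $w_t$. Moreover, strictness at one step does not automatically propagate, because your one-step map $c \mapsto c + q\max(a-c,0)$ is only weakly monotone in $c$ (it is constant when $q=1$ and $a \ge c$), so a gap $v_{t+1} < w_{t+1}$ could in principle be annihilated at a step with $q_t = \tilde{p}_t = 1$. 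The correct source of strictness is the final step, exactly as in the paper's~\eqref{eq:vn}: on every macro-fibre $y(n) = \ell m$, so $v_n = \Exp(Z_n) = \frac{1}{\ell m} = \frac{k}{n}$, while $w_n = \tilde{p}_n y_n = \frac{m+1}{m\ell(m-2)} > \frac{1}{\ell m}$ --- and this is where the hypothesis $m \ge 3$ enters, which your argument never invokes, a tell-tale sign of the omission. One then checks that the gap propagates downward: if $q_t < 1$, the strict inequality $v_{t+1} < w_{t+1}$ survives through the term $(1-q_t)v_{t+1} < (1-q_t)w_{t+1}$; if $q_t = 1$, then necessarily $y(t) = 1$, whence $a_t = \frac{1}{\ell m} < \frac{2}{m} \le y_t$, so $v_t = \max(a_t, v_{t+1}) < \max(y_t, w_{t+1}) \le w_t$. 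With this repair (final-step strictness plus the case analysis for propagation), your proof is complete; everything else --- the fibrewise independence of the $Z_t$, the reduction to the backward-induction value on each fibre, the two monotonicities, and the averaging over macro-fibres of $U$ --- is correct.
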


Putting Lemmas~\ref{lem:assume_uniform} and~\ref{lem:upper} together tells us that
\[
\lim_{\ell\to\infty} \sup_{\tau \in \Class_{(\Alg_t)}} \Exp_{D_k(\ell m)}(Z_\tau) \leq \lim_{\ell\to\infty} \sup_{\tau \in \Class_{(\Algg_t)}} \Exp_{D_k(\ell m)}(Y_\tau).
\]

In order to prove this lemma, we need a precise statement of what backward induction tells us is the optimal stopping time in a finite process.  We define a new random variable for each $t$, the \emph{value} of the game at time $t$.  This is the expected pay-off ultimately accepted given what has happened so far.  We calculate these values inductively, starting at the end.  The value of the game at the final step is just the final pay-off offered.  The value of the game at each earlier step is the maximum of the currently offered pay-off and the expected value of the game at the next step.  The optimal strategy is to stop when the currently offered pay-off is at least the expected value at the next step.

In the backward induction theorem below, the pay-offs offered are the $W_t$ and the values at each step are the $\gamma_t$.  The $\sigma$-algebras $\AlgA_t$ represent what we know at time $t$.  We remind the reader that being $\AlgA_t$-measurable means that $\sigma(W_t) \subset \AlgA_t$, that is, the value of $W_t$ is determined by what we know at time $t$ or, in the finite world, $W_t$ is constant on each atom of $\AlgA_t$.  In fact, the nested condition means that $\AlgA_t \supset \sigma(W_1,\ldots,W_t)$.  The statement of the theorem is that the strategy that stops at the first $t$ when $W_t = \gamma_t$ (or, equivalently, when $W_t$ is at least as large as the expected value of $\gamma_{t+1}$ given $\AlgA_t$) is indeed a stopping time and achieves the optimal value.  For more details, see~\cite[Theorem 3.2]{CRS}.

\begin{backward_induction}
Let $\AlgA_1 \subset \ldots \subset \AlgA_n$ be a nested sequence of $\sigma$-algebras and let $W_1, \ldots, W_n$ be a sequence of random variables with each $W_t$ being $\AlgA_t$-measurable.  Let $\Class_{(\AlgA_t)}$ be the class of stopping times relative to $(\AlgA_t)_{t \in [n]}$ and let $v^*$ be given by
\[
v^* = \sup_{\tau \in \Class_{(\AlgA_t)}} \Exp(W_\tau).
\]
Define successively $\gamma_n, \gamma_{n-1}, \ldots, \gamma_1$ by setting
\begin{align*}
\gamma_n &= W_n, \\
\gamma_t &= \max\big\{W_t, \,\Exp(\gamma_{t+1} \,|\, \AlgA_t) \big\}, \quad t = n-1, \ldots, 1.
\end{align*}
Let
\[
\tau^* = \min\{t : W_t = \gamma_t\}.
\]
Then $\tau^* \in \Class_{(\AlgA_t)}$ and
\[
\Exp(W_{\tau^*}) = \Exp(\gamma_1) = v^* \geq \Exp(W_\tau) \text{ for all } \tau \in \Class_{(\AlgA_t)}.
\]
\end{backward_induction}

This theorem provides the machinery we need to prove our lemma.

\begin{proof}[Proof of Lemma~\ref{lem:upper}]
We shall apply backward induction to the sequences $Y_t$ and $Z_t$, conditioned on $U$ occurring, to show that the optimal expected pay-off for the $Y_t$ is at least as large as for the $Z_t$.  For convenience, we shall continue to use $n$ to denote the number of elements in $D_k(\ell m)$, that is, $n = k\ell m$.

Let us first consider what happens with the sequence $Y_t$.  Recall that $(\Algg_t)_{t \in [n]}$ are defined by $\Algg_t = \sigma(Y_1, \ldots, Y_t)$ and let $(\alpha_t)_{t \in [n]}$ be defined for $(Y_t)_{t \in [n]}$ as $(\gamma_t)_{t \in [n]}$ were for $(W_t)_{t \in [n]}$ in the backward induction theorem, that is,
\begin{align*}
\alpha_n & = Y_n, \\
\alpha_t &= \max\big\{ Y_t, \Exp\big( \alpha_{t+1} \,|\, \Algg_t \big) \big\}, \quad t = n-1, \ldots, 1.
\end{align*}
Since the random variables $Y_t$ are independent, the values of $Y_1,\ldots,Y_t$ give no information about the values of $Y_{t+1},\ldots,Y_n$, and therefore $\Exp(\alpha_{t+1} \,|\, \Algg_t)$ is constant on $\Algg_t$ and equal to $\Exp(\alpha_{t+1})$.  Therefore we may define the function $v : [n] \to \Real$ by
\[
v(t) = \Exp(\alpha_t)
\]
and note that backward induction tells us that the stopping time that stops at the first $t$ such that $Y_t \geq \Exp\big( \alpha_{t+1} \,|\, \Algg_t \big) = v(t+1)$ is optimal.  By definition,
\[
v(t) = \Exp\big( \max\{Y_t, v(t+1)\} \big) \geq v(t+1),
\]
whereas $y_t$, the potential non-zero value of $Y_t$, is a non-decreasing function of $t$.  We conclude that there exists $I$ such that
\begin{equation}\label{eq:Idef}
\begin{array} {c@{\quad \text{if } }l} 
y_t < v(t+1) \quad & t \leq I, \\[+0.5ex]
y_t \geq v(t+1) \quad &  t > I,
\end{array}
\end{equation}
and therefore an optimal strategy for the game on $Y_t$ is `reject the first $I$ elements, and accept the next with a non-zero pay-off.'

Recall that the distribution of $Y_t$ is given by
\[
Y_t =  \left\{
\begin{array} {c@{\quad \text{ } } l }
y_t &\text{with probability } \tilde{p}_t, \\[+0.3ex]
0 &\text{with probability } 1 - \tilde{p}_t.
\end{array} \right.
\]
We deduce that, since $m \geq 3$ and $s(n) = m$,
\begin{equation}\label{eq:vn}
v(n) \,=\, \Exp(\alpha_n) \,=\, \Exp(Y_n) \,=\, \tilde{p}_ny_n \,=\, \frac{m+1}{m} \cdot \frac{1}{\ell (m-2)} \,>\, \frac{1}{\ell m} \,=\, \frac{k}{n}
\end{equation}
and that, for $1 \leq t \leq n-1$,
\begin{equation}\label{eq:vrec}
v(t) \,=\, \Exp(\alpha_t) \,=\, \Exp\big( \max\{Y_t, \,\Exp(\alpha_{t+1})\} \big) \,=\,  \left\{
\begin{array} {c@{\quad \text{ } } l }
\tilde{p}_t y_t + (1-\tilde{p}_t)v(t+1) &\text{if } t > I, \\[+0.3ex]
v(t+1) &\text{if } t \leq I.
\end{array} \right.
\end{equation}

We now turn our attention to the sequence of random variables $Z_t$.  Since we are conditioning on $U$, we introduce a new sequence of $\sigma$-algebras $\Alggg_t$ defined by
\[
\Alggg_t = \sigma(\Alg_t \cup \{U\})
\]
and shall consider only $\omega \in U$.  Analogously to $\gamma_t$ and $\alpha_t$, let
\[
\beta_n = Z_n
\]
and, for $1 \leq t \leq n-1$, let
\[
\beta_t = \max\big\{ Z_t, \Exp\big( \beta_{t+1} \,|\,  \Alggg_t \big) \big\}.
\]
Recalling that $\Class_{(\Alg_t)}$ denotes the class of stopping times relative to $\Alg_t$, observe that we have $\Alg_t \subset \Alggg_t$, and hence $\Class_{(\Alg_t)} \subset \Class_{(\Alggg_t)}$ and
\[
\sup_{\tau \in \Class_{(\Alg_t)}} \Exp_{D_k(\ell m)}(Z_\tau \,|\, U) \leq \sup_{\tau \in \Class_{(\Alggg_t)}} \Exp_{D_k(\ell m)}(Z_\tau \,|\, U).
\]
Note that intuitively this is obvious: it just says that having extra information (that the event $U$ occurs) can only help us in choosing our stopping time $\tau$.

Recall that $\Exp(\beta_t \,|\, \Alggg_{t-1})(\omega)$ denotes the expected value of the game at time $t$, if we have so far seen the first $t-1$ elements of $P$ and are told whether or not $U$ holds (that is, whether or not $\omega \in U$). The lemma follows from the following claim by the Backward Induction Theorem.

\begin{claim*}
For all $\omega \in U$ and for all $t \in [n]$,
\[
\Exp\big( \beta_t \,|\, \Alggg_{t-1} \big)(\omega) < v(t),
\]
where $\Alggg_0 = \{\emptyset,U,U^c,\Omega\}$ and so $\Exp(\beta_1 \,|\, \Alggg_0)(\omega) = \Exp(\beta_1 \,|\, U)$.
\end{claim*}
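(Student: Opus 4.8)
The plan is to prove the Claim by backward (downward) induction on $t$, from $t=n$ down to $t=1$, matching the set-up of the Backward Induction Theorem. Throughout I work only with $\omega\in U$, and I exploit two features of $Z_t$ that hold because we have conditioned on the uniformity event $U$: under $U$, whenever $Z_t>0$ its value is at most $y_t=\frac{s(t)+1}{m}$ (immediate from Lemma~\ref{lem:yx}, since the chain $C$ containing $\pi(t)$ has at most $\ell(s(t)+1)$ observed elements when $\omega\in U$), and the conditional probability that $Z_t>0$ is at most $\tilde p_t$. These are exactly the non-zero value and success probability built into $Y_t$ in~\eqref{eq:yp}, so the comparison with the recursion~\eqref{eq:vrec} for $v$ will be clean.

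For the base case $t=n$ I would compute $\Exp(\beta_n\mid\Alggg_{n-1})(\omega)=\Exp(Z_n\mid\Alggg_{n-1})(\omega)$ directly. At time $n-1$ exactly one element is unobserved, lying in the unique chain from which only $\ell m-1$ elements have appeared; $Z_n$ equals $1$ if that last element is the top of its chain and $0$ otherwise, and by symmetry within the chain this happens with conditional probability $\frac{1}{\ell m}$, unaffected by conditioning on $U$ (which constrains only counts). Comparing with~\eqref{eq:vn} gives $\Exp(Z_n\mid\Alggg_{n-1})(\omega)=\frac{1}{\ell m}<\frac{m+1}{m}\cdot\frac{1}{\ell(m-2)}=v(n)$, the required strict inequality.

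For the inductive step I would assume $\Exp(\beta_{t+1}\mid\Alggg_t)(\omega)<v(t+1)$ for every $\omega\in U$ and deduce the statement for $t$. Since $\beta_t=\max\{Z_t,\,\Exp(\beta_{t+1}\mid\Alggg_t)\}$, the inductive hypothesis gives, on $U$, the pointwise bound $\beta_t\le\max\{Z_t,v(t+1)\}$, and hence
\[
\Exp(\beta_t\mid\Alggg_{t-1})(\omega)\le\Exp\big(\max\{Z_t,v(t+1)\}\mid\Alggg_{t-1}\big)(\omega).
\]
Writing $q$ for the conditional probability that $Z_t>0$, the two features above give $q\le\tilde p_t$ and $Z_t\le y_t$ on $\{Z_t>0\}$. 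When $t>I$ (so $y_t\ge v(t+1)$ by~\eqref{eq:Idef}) this yields $\Exp(\max\{Z_t,v(t+1)\}\mid\Alggg_{t-1})\le q\,y_t+(1-q)v(t+1)\le\tilde p_t y_t+(1-\tilde p_t)v(t+1)=v(t)$ via~\eqref{eq:vrec}; when $t\le I$ (so $y_t<v(t+1)$) every value of $Z_t$ lies below $v(t+1)$, so $\max\{Z_t,v(t+1)\}=v(t+1)=v(t)$. In both cases the right-hand side is at most $v(t)$.

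Finally I would recover strictness. On $\{Z_t<v(t+1)\}$ the strict inductive hypothesis gives $\beta_t=\max\{Z_t,\Exp(\beta_{t+1}\mid\Alggg_t)\}<v(t+1)=\max\{Z_t,v(t+1)\}$, so the first displayed inequality is strict as soon as $\Prob[Z_t<v(t+1)\mid\Alggg_{t-1}](\omega)>0$; for $t\le I$ this probability is $1$ since $Z_t\le y_t<v(t+1)$ always, and for $t>I$ it suffices that $\Prob[Z_t=0\mid\Alggg_{t-1}](\omega)>0$, which holds because with positive probability $\pi(t)$ is not the top so far of its chain. The main obstacle is the estimate $q\le\tilde p_t$: I expect to justify it by the symmetry underlying Lemma~\ref{lem:yx}, namely that, conditionally on $\Alggg_{t-1}$ and on $\pi(t)$ lying in a chain $C$ with $y$ observed elements, the maximum of those $y$ elements is equally likely to occupy any of the $y$ observed positions, so $\pi(t)$ is maximal in $P_t$ with probability $\frac1y$; since $y\ge\ell(s(t)-2)$ under $U$, this is at most $\tilde p_t$. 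The delicate point is that this within-chain symmetry survives conditioning on the interleaving of the chains and on $U$, because $U$ restricts only the counts, not the location of each chain's maximum within its own observation order.
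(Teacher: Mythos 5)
Your proposal is correct and follows essentially the same route as the paper: downward induction with base case $\Exp(\beta_n\,|\,\Alggg_{n-1})=k/n<v(n)$, the two key facts that on $U$ one has $Z_t\leq y_t$ and $\Prob(Z_t>0\,|\,\Alggg_{t-1})\leq\tilde{p}_t$ (via Lemma~\ref{lem:yx}), and the comparison with the recursion~\eqref{eq:vrec} split according to $t\leq I$ or $t>I$. The only cosmetic difference is that you replace $\beta_t$ pointwise by $\max\{Z_t,v(t+1)\}$ (using that the atom of $\Alggg_{t-1}$ containing $\omega\in U$ lies inside $U$) and partition on $\{Z_t>0\}$, whereas the paper partitions on $\{Z_t\geq\Exp(\beta_{t+1}\,|\,\Alggg_t)\}$ as in~\eqref{eq:C1}--\eqref{eq:C4}; you are in fact somewhat more careful than the paper about where the strict inequality comes from.
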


\begin{proof}[Proof of claim]
We shall prove the claim by induction on $n - t$, using~\eqref{eq:vn} and~\eqref{eq:vrec}. First, recall that $\Exp( \beta_{n} \,|\, \Alggg_{n-1})(\omega)$ is just the probability that the final element of $\omega$ is maximal in $P$. Thus, by~\eqref{eq:vn},
\[
\Exp\big( \beta_{n} \,|\, \Alggg_{n-1} \big) = \frac{k}{n} < v(n),
\]
which proves the case $n - t = 0$.

So let $1 \leq t \leq n-1$, and assume that the result holds for $t + 1$. We claim that
\begin{equation}\label{eq:beta}
\Exp\big( \beta_t \,\big|\,\Alggg_{t-1} \big)(\omega) \, < \,  \left\{
\begin{array} {c@{\quad \text{ } } l }
\tilde{p}_t y_t + (1-\tilde{p}_t)v(t+1) &\text{if } t > I, \\[+0.3ex]
v(t+1) &\text{if } t \leq I,
\end{array} \right.
\end{equation}
and hence that $\Exp\big( \beta_t \,\big|\,\Alggg_{t-1} \big)(\omega) < v(t)$, by~\eqref{eq:vrec}. 

In order to prove~\eqref{eq:beta}, let $\omega \in U$ and consider the atom $A \in \Alggg_{t-1}$ which contains $\omega$. Partitioning the space according to whether or not $Z_t \geq \Exp \big(\beta_{t+1} \,|\, \Alggg_t \big)$, we obtain
\begin{multline}\label{eq:C1}
\Exp\big( \beta_t \,|\, \Alggg_{t-1} \big)(\omega) \, = \, \Exp\big(\beta_t \,|\, A \big) \, = \, \Prob\Big( Z_t \geq \Exp \big(\beta_{t+1} \,|\, \Alggg_t \big) \,\big|\, A \Big) \cdot \Exp\Big( Z_t \,\big|\, \big( Z_t \geq \Exp(\beta_{t+1} \,|\, \Alggg_t) \big) \cap A\Big) \\
+ \, \Prob\Big( Z_t < \Exp \big(\beta_{t+1} \,|\, \Alggg_t \big) \,\big|\, A\Big) \cdot \Exp\Big(\Exp \big(\beta_{t+1} \,|\, \Alggg_t \big) \,\big|\, \big(Z_t < \Exp \big(\beta_{t+1} \,|\, \Alggg_t) \big) \cap A \Big),
\end{multline}
since if $Z_t \geq \Exp \big(\beta_{t+1} \,|\, \Alggg_t \big)$ then the payoff is $Z_t$, and otherwise the payoff is $\Exp \big(\beta_{t+1} \,|\, \Alggg_t \big)$.

Now, by the induction hypothesis we have 
$$\Exp\big(\beta_{t+1} \,|\, \Alggg_t\big)(\omega') < v(t+1)$$ 
for every $\omega' \in (Z_t < \Exp(\beta_{t+1} \,|\, \Alggg_t)) \cap A$, and therefore
\begin{equation}\label{eq:C2}
\Exp\Big( \Exp \big(\beta_{t+1} \,|\, \Alggg_t \big) \,\big|\, \big( Z_t < \Exp(\beta_{t+1} \,|\, \Alggg_t) \big) \cap A \Big)  <  v(t+1).
\end{equation}
Moreover, by Lemma~\ref{lem:yx} and~\eqref{eq:yp}, and since $\omega \in U$, we have
\[
Z_t(\omega) \leq y_t \qquad \text{and} \qquad \Prob\big( Z_t > 0 \,|\, \Alggg_{t-1} \big)(\omega) \leq \tilde{p}_t.
\]
Hence
\begin{equation}\label{eq:C3}
\Exp\Big( Z_t \,\big|\, \big( Z_t \geq \Exp(\beta_{t+1} \,|\, \Alggg_t) \big) \cap A \Big) \le y_t,
\end{equation}
and
\begin{equation}\label{eq:C4}
\Prob\big( Z_t < \Exp(\beta_{t+1} \,|\, \Alggg_t) \,\big|\, A\big) \geq 1 - \tilde{p}_t.
\end{equation}
Finally, recall that $I$ was chosen so that $y_t < v(t+1)$ if and only if $t \leq I$. Now~\eqref{eq:beta} follows easily from~\eqref{eq:C1},~\eqref{eq:C2},~\eqref{eq:C3} and~\eqref{eq:C4}. This completes the induction step, and hence proves the claim.
\end{proof}

The lemma follows from the claim, since, by the backward induction theorem, we have
\[
\sup_{\tau \in \Class_{(\Alggg_t)}} \Exp_{D_k(\ell m)}(Z_\tau \,|\, U) = \Exp(\beta_1 \,|\, U) < v(1) = \Exp(\alpha_1) = \sup_{\tau \in \Class_{(\Algg_t)}} \Exp_{D_k(\ell m)}(Y_\tau),
\]
as required.
\end{proof}

In the final lemma before the proof of Theorem~\ref{thm:upper_bound}, we use backward induction to show that an optimal stopping time for the process with the $Y_t$ takes the simple form of rejecting the first $k\ell u^*$ elements, for some integer $u^*$, and accepting the next non-zero pay-off.

\begin{lemma} \label{lem:simple_stopping_times}
For $u^* \in \{0,\ldots,m-1\}$, let
\[
\tau_{u^*} = \left\{
\begin{array} {c@{\quad \text{} }l} 
\min\left\{t > k\ell u^* : Y_t > 0\right\} & \text{if this exists},\\
	n & \text{otherwise}.
\end{array}\right.
\]
Then
\[
\sup_{\tau \in \Class_{(\Algg_t)}} \Exp_{D_k(\ell m)}(Y_\tau) = \sup_{u^* \in \{0,\ldots,m-1\}} \Exp_{D_k(\ell m)}(Y_{\tau_{u^*}})
\]
\end{lemma}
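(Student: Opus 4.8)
The plan is to build directly on what the proof of Lemma~\ref{lem:upper} has already established for the independent sequence $(Y_t)$: backward induction shows that an optimal strategy is of the threshold form ``reject the first $I$ elements and accept the next non-zero pay-off'', where $I$ is characterised by~\eqref{eq:Idef}, and that the optimal value equals $v(1)$. Writing $\tau_I$ for this stopping time, we have $\sup_{\tau \in \Class_{(\Algg_t)}} \Exp_{D_k(\ell m)}(Y_\tau) = \Exp_{D_k(\ell m)}(Y_{\tau_I})$. Since each $\tau_{u^*}$ is itself a member of $\Class_{(\Algg_t)}$, the inequality $\sup_{u^*} \Exp(Y_{\tau_{u^*}}) \le \sup_{\tau} \Exp(Y_{\tau})$ is immediate. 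Hence the entire content of the lemma is to show that the optimal threshold $I$ may be taken to be a multiple of $k\ell$, say $I = k\ell u^*$ with $u^* \in \{0,\ldots,m-1\}$; for then $\tau_I = \tau_{u^*}$ and the two suprema coincide.

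The structural fact I would exploit is that $s(t) = \lceil t/(k\ell)\rceil$ from~\eqref{eq:sdef} is constant on each block of $k\ell$ consecutive indices $\{k\ell(s-1)+1,\ldots,k\ell s\}$, so by~\eqref{eq:yp} both $y_t$ and $\tilde{p}_t$ are constant on each such block; the $Y_t$ are thus identically distributed within a block. The crux is to rule out $I$ lying strictly inside a block. Suppose for contradiction that $I$ and $I+1$ lie in the same block, and set $y := y_I = y_{I+1}$ and $\tilde{p} := \tilde{p}_{I+1}$. Since $I+1 > I$, property~\eqref{eq:Idef} gives $y_{I+1} \ge v(I+2)$, that is $y \ge v(I+2)$, while the recursion~\eqref{eq:vrec} gives $v(I+1) = \tilde{p}\,y + (1-\tilde{p})v(I+2)$. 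As this is a convex combination of $y$ and $v(I+2)$ with $v(I+2) \le y$, we obtain $v(I+1) \le y = y_I$. But $I \le I$ forces $y_I < v(I+1)$ by~\eqref{eq:Idef}, a contradiction. Therefore $I$ and $I+1$ lie in different blocks, which occurs precisely when $I$ is a multiple of $k\ell$.

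Finally I would pin down the range of $u^* = I/(k\ell)$ and collect the conclusion. Because the final pay-off is always accepted if reached, we have $I \le n-1$; being a multiple of $k\ell$ with $n = k\ell m$, this forces $I \le k\ell(m-1)$, so $u^* \in \{0,\ldots,m-1\}$. With $\tau_I = \tau_{u^*}$ this gives $\sup_{\tau}\Exp(Y_\tau) = \Exp(Y_{\tau_{u^*}}) \le \sup_{u^*}\Exp(Y_{\tau_{u^*}})$, and together with the trivial reverse inequality the lemma follows. I expect the only delicate points to be the edge cases, namely blocks with $s(t)\le 2$ (where $\tilde{p}_t = 1$) and the terminal index $n$: the former is harmless since the convex-combination argument still yields $v(I+1) \le y$, and the latter, where $I+1 = n$, is handled identically using $v(n) = \Exp(Y_n) = \tilde{p}_n y_n \le y_n$ in place of~\eqref{eq:vrec}. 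Beyond verifying that these cases do not escape the contradiction step, no genuine obstacle remains.
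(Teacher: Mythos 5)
Your proof is correct and takes essentially the same route as the paper: both start from the threshold characterisation of the optimal rule via $I$ in~\eqref{eq:Idef} (established in the proof of Lemma~\ref{lem:upper}) and rule out $I$ lying strictly inside a block of $k\ell$ indices by combining the constancy of $y_t$ on blocks with~\eqref{eq:vrec} to get $v(I+1) \le y_I$ (the paper writes this as $v(I+1) \le \max\{y_{I+1}, v(I+2)\}$, you as the equivalent convex-combination bound), contradicting $y_I < v(I+1)$. Your additional handling of the edge cases ($I+1 = n$, blocks with $\tilde{p}_t = 1$, and the range of $u^*$) is a touch more careful than the paper's write-up but does not change the argument.
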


\begin{proof}
We have already shown in the proof of Lemma~\ref{lem:upper} that an optimal strategy takes the form `ignore the first $I$, and accept the next non-zero pay-off.'  In fact, we see that $I$ must be a multiple of $k\ell$: suppose, for contradiction, that $I = k\ell u^* + r$, where $r \in \{1,\ldots, k\ell - 1\}$. Then $s(I) = s(I+1)$ by~\eqref{eq:sdef}; recall from~\eqref{eq:Idef} that $y_t = (s(t)+1) / m < v(t+1)$ if and only if $t \leq I$. It follows that
\[
v(I+2) \leq \frac{s(I+1)+1}{m}  = \frac{u^*+2}{m},
\]
since we would be willing to stop at time $I+1$. But then, by~\eqref{eq:vrec},
\[
v(I+1) \leq \max\big\{ y_{I+1}, v(I+2)\big\} = \frac{u^*+2}{m} = \frac{s(I)+1}{m}.
\]
Thus we would in fact be willing to stop at time $I$, which is a contradiction.
\end{proof}

We are now in a position to complete the proof of the main theorem in this section.

\begin{proof}[Proof of Theorem~\ref{thm:upper_bound}]
All that remains is to calculate and maximize $\Exp(Y_{\tau_{u^*}})$ over $u^*$, where $\tau_{u^*}$ is the smallest $t > k\ell u^*$ such that $Y_t > 0$.  These calculations are very similar to those on page \pageref{page:Z_t_calculations}, but also include error terms which tend to zero as $\ell,m \to \infty$.

We shall assume first that $u^* \to \infty$ as $m \to \infty$, and deduce from our calculation that this is a valid assumption. Indeed, if $t = o(n)$ then $y_t = o(1)$, whereas we shall obtain a probability of success that is separated from zero. We should therefore never accept a payoff for $t = o(n)$. In particular, for sufficiently large $m$ we have $u^* \ge 3$, and so
\[
\Exp\left(Y_{\tau_{u^*}}\right) = \sum_{u = u^*}^{m-1} \sum_{r=1}^{k\ell} \Prob\big(Y_{k\ell u^*+1}=0,\ldots,Y_{k\ell u+r-1}=0,Y_{k\ell u+r} > 0 \big)y_{k\ell u+r}.
\]
Recall that the $Y_t$ are independent, and that $s(k\ell u+h) = u+1$ when $1 \leq h \leq k\ell$. Hence, using the convention that the empty product takes the value 1, we obtain
\begin{align*}
\Exp\left(Y_{\tau_{u^*}}\right) & = \sum_{u = u^*}^{m-1} \sum_{r=1}^{k\ell} \left(\prod_{q = u^*+1}^u \left(1-\frac{1}{\ell (q - 2)}\right)^{k\ell}\right) \left(1-\frac{1}{\ell (u-1)}\right)^{r-1} \cdot \frac{1}{\ell (u-1)} \cdot \frac{u+2}{m} \\
&= \sum_{u = u^*}^{m-1} \left(\prod_{q = u^*+1}^u \left(1-\frac{1}{\ell (q - 2)}\right)^{k\ell}\right) \left(1 - \left(1-\frac{1}{\ell (u-1) }\right)^{k\ell}\right) \cdot \frac{u+2}{m} \\
&= \sum_{u = u^*}^{m-1} \left(\prod_{q = u^*+1}^u \left(1-\frac{1}{\ell (q - 2)}\right)^{k\ell}\right) \cdot \frac{u+2}{m} \\
&\hspace{4cm} \mbox{} - \sum_{u = u^*+1}^m \left(\prod_{q = u^*+1}^u \left(1-\frac{1}{\ell (q - 2)}\right)^{k\ell}\right) \cdot \frac{u+1}{m} \\
&= \frac{u^*+2}{m} - \left(\prod_{q = u^*+1}^{m} \left(1-\frac{1}{\ell (q - 2)}\right)^{k\ell}\right) \cdot \frac{m+1}{m} \\
&\hspace{4cm} \mbox{} + \sum_{u = u^*+1}^{m-1} \left(\prod_{q = u^*+1}^u \left(1-\frac{1}{\ell (q - 2)}\right)^{k\ell}\right) \cdot \frac{1}{m}. \\
\end{align*}

Now, let $\epsilon > 0$ be arbitrary, choose $m = m(\epsilon)$ and $\ell = \ell (m,\epsilon)$ sufficiently large, and recall that therefore $u^* = u^*(\epsilon)$ may be chosen to be sufficiently large also. Since $\left(1 - \frac{1}{n}\right)^n < \frac{1}{e} < \left(1 - \frac{1}{n}\right)^{n-1}$ for all $n \geq 2$, we have
$$\prod_{q = u^*+1}^{m} \left(1-\frac{1}{\ell (q - 2)}\right)^{k\ell} \; \ge \; \exp\left( - \left( \frac{k\ell + 1}{\ell} \right) \sum_{q = u^*+1}^{m} \frac{1}{q-2} \right) \; \ge \; \left( \frac{u^*}{m} \right)^k - \frac{\epsilon}{2},$$
and similarly
$$\prod_{q = u^*+1}^u \left(1-\frac{1}{\ell (q - 2)}\right)^{k\ell} \; \le \; \exp\left( - k \sum_{q = u^*+1}^{u} \frac{1}{q-2} \right) \; \le \; \left( \frac{u^*}{u} \right)^k.$$
Setting $p = \frac{u^*}{m}$, we obtain
\begin{align*}
\Exp\left(Y_{\tau_{u^*}}\right) & \leq \left\{
\begin{array} {c@{\quad \text{ } } l }
p-p + p \log \frac{1}{p}  + \epsilon &\text{if } k = 1, \\
p - p^k + \displaystyle\frac{p}{k-1} \left(1 - p^{k-1} \right) + \epsilon &\text{if } k > 1.
\end{array} \right.
\end{align*}

As before, these expressions are maximized when $p = p_k$, and thus
$$\lim_{m \to \infty} \lim_{\ell \to \infty} \Exp\big( Y_{\tau_{u^*}}  \big) \; \le \; p_k + \epsilon.$$
Since $\epsilon > 0$ was arbitrary, this completes the proof.
\end{proof}

Putting Corollary~\ref{cor:lower_bound} and Theorem~\ref{thm:upper_bound} together gives Theorem~\ref{thm:pk}.

\section*{Acknowledgements}

We should like to thank B\'ela Bollob\'as and Graham Brightwell for their several significant contributions to this paper. In particular, we are grateful for important ideas relating to an earlier proof of Theorem~\ref{thm:1e}, for further productive discussions about the problem, and for their comments on the paper.

\end{document}